\DeclareMathOperator{\rank}{rank}
\DeclareMathOperator{\dif}{d}
\newcommand{\Cal}{\mathcal{C}}
\renewcommand{\H}{\mathscr{H}}
\newcommand{\F}{\mathscr{F}}
\newcommand{\tG}{\mathscr{G}}
\newcommand{\ol}{\mathcal{O}}
\def \a{\alpha}
\def \e{\eta}
\def \ep{\varepsilon}
\def \o{\omega}
\def \phi{\varphi}
\def \Phi{\varPhi}
\def \p{\pi}
\def \r{\rho}
\def \s{\sigma}
\def \t{\tau}
\def \R{\mathbb{R}}
\def \Hq{\mathbb{H}\,}
\def \C{\mathbb{C}\,}
\def\widecheckg{g^{\hspace*{-2.5pt}\vbox to 5pt{\hbox to
0pt{\LARGE$\check{}$}}}\hspace*{2pt}}
\def\widecheckl{\lambda^{\hspace*{-3.5pt}\vbox to 8pt{\hbox to
0pt{\LARGE$\check{}$}}}\hspace*{2pt}}
\begin{document}

\title{Twistor Theory for co-CR quaternionic manifolds and related structures}
\author{Stefano~Marchiafava and Radu~Pantilie}
\thanks{R.P.\ acknowledges that this work was partially supported by the Visiting Professors Programme of GNSAGA-INDAM of C.N.R. (Italy).\\
\indent
S.M.\ acknowledges that this work was done under the program of GNSAGA-INDAM of C.N.R. and PRIN07 ''Geometria Riemanniana e strutture
differenziabili'' of MIUR (Italy).}
\email{\href{mailto:marchiaf@mat.uniroma1.it}{marchiaf@mat.uniroma1.it},
       \href{mailto:radu.pantilie@imar.ro}{radu.pantilie@imar.ro}}
\address{S.~Marchiafava, Dipartimento di Matematica, Istituto ``Guido~Castelnuovo'',
Universit\`a degli Studi di Roma ``La Sapienza'', Piazzale Aldo~Moro, 2 - I 00185 Roma - Italia}
\address{R.~Pantilie, Institutul de Matematic\u a ``Simion~Stoilow'' al Academiei Rom\^ane,
C.P. 1-764, 014700, Bucure\c sti, Rom\^ania}
\subjclass[2010]{Primary 53C28, Secondary 53C26}

\newtheorem{thm}{Theorem}[section]
\newtheorem{lem}[thm]{Lemma}
\newtheorem{cor}[thm]{Corollary}
\newtheorem{prop}[thm]{Proposition}

\theoremstyle{definition}

\newtheorem{defn}[thm]{Definition}
\newtheorem{rem}[thm]{Remark}
\newtheorem{exm}[thm]{Example}

\numberwithin{equation}{section}

\maketitle
\thispagestyle{empty}
\vspace{-10mm}
\section*{Abstract}
\begin{quote}
{\footnotesize
In a general and non metrical framework, we introduce the class of co-CR quaternionic manifolds,
which contains the class of quaternionic manifolds,
whilst in dimension three it particularizes to give the Einstein--Weyl spaces.
We show that these manifolds have a rich natural Twistor Theory and, along the way, we obtain a heaven space construction
for quaternionic-K\"ahler ma\-ni\-folds.}
\end{quote}

\section*{Introduction}

\indent
Over any three-dimensional conformal manifold $M$, endowed with a conformal connection, there is a sphere bundle $Z$
endowed with a natural CR structure \cite{LeB-CR_twistors}\,. Furthermore, if $M$ is real analytic then \cite{LeB-Hspace}
the CR structure of $Z$ is induced by a germ unique embedding of $Z$ into a three-dimensional complex manifold $\widetilde{Z}$ which
is the twistor space of an anti-self-dual manifold $\widetilde{M}$\,; accordingly, $M$ is a hypersurface in $\widetilde{M}$,
and the latter is called the \emph{heaven space} (due to \cite{New-heaven}\,; cf.\ \cite{LeB-CR_twistors}\,)
of $M$ (endowed with the given conformal connection).\\
\indent
In \cite{fq_1} (see Section \ref{section:cr_q}\,), we obtained the higher dimensional versions of these constructions by introducing the notion
of \emph{CR quaternionic manifold}.
Thus, the generic submanifolds of codimensions at most $2k-1$\,, of a quaternionic manifold of dimension $4k$\,, are endowed with
natural CR quaternionic structures. Moreover, assuming real-analyticity, any CR~quaternionic manifold is obtained this way through a germ unique
embedding into a quaternionic manifold \cite{fq_1}\,.\\
\indent
Returning to the three-dimensional case, by \cite{Hit-complexmfds}\,, if the inclusion of $M$ into $\widetilde{M}$ admits
a retraction which is twistorial (that is, its fibres correspond to a (one-dimensional) holomorphic foliation on $\widetilde{Z}$\,)
then the connection used to construct the CR structure on $Z$ may be assumed to be a Weyl connection;
moreover, there is a natural correspondence between such retractions and Einstein-Weyl connections on $M$.
Furthermore, any Einstein--Weyl connection $\nabla$ on $M$ determines a complex surface $Z_{\nabla}$ and a
holomorphic submersion from $\widetilde{Z}$ onto it; then $Z_{\nabla}$ is the twistor space of $(M,\nabla)$ \cite{Hit-complexmfds}\,.\\
\indent
Furthermore, the correspondence between Einstein--Weyl spaces and their twistor spaces is similar
to the correspondence between anti-self-dual manifolds and their twistor spaces (see, also, \cite{LouPan-II}\,).
Also, from the point of view of Twistor Theory, the
anti-self-dual manifolds are just four-dimensional quaternionic manifolds (see \cite{IMOP}\,).\\
\indent
This raises the obvious question: \emph{is there a natural class of manifolds, endowed with twistorial structures, which
contains both the quaternionic manifolds and the three-dimensional Einstein--Weyl spaces}\,?\\
\indent
In this paper, where the adopted point of view is essentially non metrical, we answer in the affirmative to this question
by introducing, in a general framework, the notion
of \emph{co-CR~quaternionic manifolds} and we initiate the study of their twistorial properties.
This notion is based on the \emph{(co-)CR quaternionic vector spaces} which were introduced and classified in \cite{fq_1}
(see Section \ref{section:review_lin_co-CR_q}\,, and, also, Appendix \ref{appendix:(co-)cr_q_intrinsic} for an alternative definition)
and, up to the integrability, it is dual to the notion of
\emph{CR quaternionic manifolds}.\\
\indent
An interesting situation to consider is when a manifold may be endowed with both a CR quaternionic and a co-CR quaternionic structure
which are \emph{compatible}.
This gives the notion of \emph{$f$-quaternionic} manifold, which has two twistor spaces. The simplest example
is provided by the three-dimensional Einstein--Weyl spaces, endowed with the
twistorial structures of \cite{LeB-CR_twistors} and \cite{Hit-complexmfds}\,, respectively; furthermore,
the above mentioned twistorial retraction admits a natural generalization to the $f$-quaternionic manifolds (Corollary \ref{cor:f-q}\,).
Also, the quaternionic manifolds may be characterised as $f$-quaternionic manifolds for which the two twistor spaces coincide.\\
\indent
Other examples of $f$-quaternionic manifolds are the Grassmannnian ${\rm Gr}_3^+(l+3,\R)$ of oriented three-dimensional
vector subspaces of $\R^{l+3}$ and the flag manifold ${\rm Gr}_2^0(2n+2,\C\!)$ of two-dimensional complex vector subspaces of $\C^{\!2n+2}(=\Hq^{\!n+1})$
which are isotropic with respect to the underlying complex symplectic structure of $\C^{\!2n+2}$, $(l,n\geq1)$\,.
The twistor spaces of their underlying co-CR quaternionic structures are the hyperquadric $Q_{l+1}$ of isotropic one-dimensional
complex vector subspaces of $\C^{\!l+3}$ and ${\rm Gr}_2^0(2n+2,\C\!)$ itself, respectively.
Also, their heaven spaces are the Wolf spaces
${\rm Gr}_4^+(l+4,\R)$ and ${\rm Gr}_2(2n+2,\C\!)$\,, respectively (see Examples \ref{exm:Gr_3^+} and \ref{exm:Gr_2^0symplectic}\,,
for details). Another natural class of $f$-quaternionic manifolds is described in Example \ref{exm:SO(Q)}\,.\\
\indent
The notion of almost $f$-quaternionic manifold appears, also, in a different form, in \cite{KasMCabTri}\,.
However, there it is not considered any adequate integrability condition. Also, in \cite{Bi-qK_heaven}\,, \cite{AleKam-Annali08} and \cite{BejFar}
are considered, under particular dimensional assumptions and/or in a metrical framework, particular classes of almost $f$-quaternionic manifolds.\\
\indent
Let $N$ be the heaven space of a real analytic $f$-quaternionic manifold $M$, with $\dim N=\dim M+1$\,.
If the connection of the $f$-quaternionic structure on $M$ is induced by a torsion free connection on $M$ then the
twistor space of $N$ is endowed with a natural holomorphic distribution of codimension one which is transversal to the
twistor lines corresponding to the points of $N\setminus M$. Furthermore, this construction also works if, more generally, $M$
is a real analytic CR~quaternionic manifold which is a \emph{q-umbilical} hypersurface of its heaven space $N$.
Then, under a non-degeneracy condition, this distribution defines a holomorphic contact structure on the twistor
space of $N$. Therefore, according to \cite{LeB-twist_qK}\,, it determines a quaternionic-K\"ahler structure
on $N\setminus M$ (cf.\ \cite{Bi-qK_heaven}\,,\cite{Du-qK_dim7}\,).\\
\indent
It is well known (see, for example, \cite{PanWoo-sd} and the references therein) that the three-dimensional Einstein--Weyl spaces
are one of the basic ingredients in constructions of anti-self-dual (Einstein) manifolds. One of the aims of
this paper is to give a first indication that the study of co-CR quaternionic manifolds will lead to a better understanding
of quaternionic(-K\"ahler) manifolds.

\section{Brief review of (co-)CR quaternionic vector spaces} \label{section:review_lin_co-CR_q}

\indent
The group of automorphisms of the (unital) associative algebra of quaternions $\Hq$ is ${\rm SO}(3)$ acting trivially on $\R\,(\subseteq\Hq)$
and canonically on ${\rm Im}\Hq$.\\
\indent
A \emph{linear hypercomplex structure} on a (real) vector space $E$ is a morphism of associative algebras $\s:\Hq\to{\rm End}(E)$\,.
A \emph{linear quaternionic structure} on $E$ is an equivalence class of linear hypercomplex structures, where two
linear hypercomplex structures $\s_1,\s_2:\Hq\to{\rm End}(E)$ are \emph{equivalent} if there exists $a\in{\rm SO}(3)$ such that
$\s_2=\s_1\circ a$. A \emph{hypercomplex/quaternionic vector space} is a vector space endowed with a
linear hypercomplex/quaternionic structure (see \cite{AleMar-Annali96}\,,\,\cite{IMOP}\,).\\
\indent
If $\s:\Hq\to{\rm End}(E)$ is a linear hypercomplex structure on a vector space $E$ then the unit sphere $Z$ in
$\s({\rm Im}\Hq)\subseteq{\rm End}(E)$ is the corresponding space of \emph{admissible linear complex structures}.
Obviously, $Z$ depends only of the linear quaternionic structure determined by $\s$.\\
\indent
Let $E$ and $E'$ be quaternionic vector spaces and let $Z$ and $Z'$ be the corresponding spaces of admissible linear complex structures.
A linear map $t:E\to E'$ is \emph{quaternionic}, with respect to some function $T:Z\to Z'$, if $t\circ J=T(J)\circ t$, for any $J\in Z$
(see \cite{AleMar-Annali96}\,). If, further, $t\neq0$ then $T$ is unique and an orientation preserving isometry (see \cite{IMOP}\,).\\
\indent
The basic example of a quaternionic vector space is $\Hq^{\!k}$ endowed with the linear quaternionic structure given by its canonical
(left) $\Hq$-module structure. Moreover, for any quaternionic vector space of dimension $4k$ there exists a quaternionic
linear isomorphism from it onto $\Hq^{\!k}$. The group of quaternionic linear automorphisms of $\Hq^{\!k}$ is ${\rm Sp}(1)\cdot{\rm GL}(k,\Hq)$
acting on it by $\bigl(\pm(a,A),x\bigr)\mapsto axA^{-1}$\,, for any $\pm(a,A)\in{\rm Sp}(1)\cdot{\rm GL}(k,\Hq)$ and $x\in\Hq^{\!k}$.
If we restrict this action to ${\rm GL}(k,\Hq)$ then we obtain the group of hypercomplex linear automorphisms of $\Hq^{\!k}$.\\
\indent
If $\s:\Hq\to{\rm End}(E)$ is a linear hypercomplex structure then $\s^*:\Hq\to{\rm End}(E^*)$\,, where $\s^*(q)$ is the transpose
of $\s(\overline{q})$\,, $(q\in\Hq)$\,, is \emph{the dual linear hypercomplex structure}. Accordingly, we define the
dual of a linear quaternionic structure.

\begin{defn}[\,\cite{fq_1}\,]
A \emph{linear co-CR quaternionic structure} on a vector space $U$ is a pair $(E,\r)$\,, where $E$ is a quaternionic vector space
and $\r:E\to U$ is a surjective linear map such that ${\rm ker}\,\r\cap J({\rm ker}\,\r)=\{0\}$\,, for any admissible linear complex
structure $J$ on~$E$\,.\\
\indent
A \emph{co-CR quaternionic vector space} is a vector space endowed with a linear co-CR quaternionic structure.
\end{defn}

\indent
Dually, a \emph{CR quaternionic vector space} is a triple $(U,E,\iota)$\,, where $E$ is a quaternionic vector space
and $\iota:U\to E$ is an injective linear map such that ${\rm im}\,\r+ J({\rm im}\,\r)=E$\,, for any admissible linear complex
structure $J$ on $E$\,.\\
\indent
A map $t:(U,E,\r)\to(U',E',\r')$ between co-CR quaternionic vector spaces is \emph{co-CR quaternionic linear}
(with respect to some map $T:Z\to Z'$\,) if there exists a map $\widetilde{t}:E\to E'$ which is quaternionic linear
(with respect to $T$) such that $t\circ\r=\r'\circ\widetilde{t}$.\\
\indent
By duality, we also have the notion of \emph{CR quaternionic linear map}.\\
\indent
Note that, if $(U,E,\iota)$ is a CR quaternionic vector space then the inclusion $\iota:U\to E$ is CR quaternionic linear.
Dually, if $(U,E,\r)$ is a co-CR quaternionic vector space then the projection $\r:E\to U$ is co-CR quaternionic linear.\\
\indent
By working with pairs $(U,E)$\,, where $E$ is a quaternionic vector space and $U\subseteq E$ is a real vector subspace,
we call $({\rm Ann}\,U,E^*)$ \emph{the dual pair} of $(U,E)$\,, where the annihilator ${\rm Ann}\,U$ is formed of those $\a\in E^*$
such that $\a|_U=0$\,.\\
\indent
Any CR quaternionic vector space $(U,E,\iota)$ corresponds to the pair $({\rm im}\,\iota,E)$\,, whilst any
co-CR quaternionic vector space $(U,E,\r)$ corresponds to the pair $({\rm ker}\,\r,E)$\,. These associations define
functors in the obvious way.\\
\indent
To any pair $(U,E)$ we associate a (coherent analytic) sheaf over $Z$ as follows. Let $E^{0,1}$ be the holomorphic
vector bundle over $Z$ whose fibre over any $J\in Z$ is the $-{\rm i}$ eigenspace of $J$. Let $u:E^{0,1}\to Z\times(E/U)^{\C}$ be the
composition of the inclusion $E^{0,1}\to Z\times E^{\C}$ followed by the projection $Z\times E^{\C}\to Z\times(E/U)^{\C}$.

\begin{defn}[\,\cite{vq}\,]
$\mathcal{U}=\mathcal{U}_-\oplus\mathcal{U}_+$ is \emph{the sheaf of $(U,E)$}\,,
where $\mathcal{U}_-={\rm ker}\,u$ and $\mathcal{U}_+={\rm coker}\,u$\,.
\end{defn}

\indent
If $(U,E)$ corresponds to a (co-)CR quaternionic vector space then\/ $\mathcal{U}$ is its holomorphic vector bundle, introduced in \cite{fq_1}\,.
In fact, $(U,E)$ corresponds to a co-CR quaternionic vector space if and only if\/ $\mathcal{U}$ is a holomorphic vector bundle
and $\mathcal{U}=\mathcal{U}_+$\,. Dually, $(U,E)$ corresponds to a CR quaternionic vector space if and only if\/ $\mathcal{U}=\mathcal{U}_-$
(note that, $\mathcal{U}_-$ is a holomorphic vector bundle for any pair).
See \cite{vq} for more information on the functor $(U,E)\mapsto\mathcal{U}$\,.\\
\indent
Here are the basic examples of (co-)CR quaternionic vector spaces.

\begin{exm}[cf.\ \cite{fq_1}\,] \label{exm:V_k}
1) Let $V_k$\,, $(k\geq1)$\,, be the vector subspace of $\Hq^{\!k}$ formed of all vectors of the form
$(z_1\,,\overline{z_1}+z_2\,{\rm j}\,, z_3-\overline{z_2}\,{\rm j}\,,\ldots)\,,$
where $z_1\,,\ldots,z_k$ are complex numbers and $\overline{z_k}=(-1)^kz_k$\,. Then $(V_k,\Hq^{\!k})$ corresponds to a
co-CR quaternionic vector space and its holomorphic vector bundle is $\ol(2k)$\,. Hence, the dual pair
is a CR quaternionic vector space and its holomorphic vector bundle is $\ol(-2k)$\,.\\
\indent
2) Let $V'_0=\{0\}$ and, for $k\geq1$\,, let $V'_k$ be the vector subspace of $\Hq^{\!2k+1}$ formed of all vectors of the form
$(z_1\,,\overline{z_1}+z_2\,{\rm j}\,, z_3-\overline{z_2}\,{\rm j}\,,\ldots, \overline{z_{2k-1}}+z_{2k}\,{\rm j}\,,-\overline{z_{2k}}\,{\rm j})\,,$
where $z_1\,,\ldots,z_{2k}$ are complex numbers. Then $(V'_k,\Hq^{\!2k+1})$ corresponds to a
co-CR quaternionic vector space and its holomorphic vector bundle is $2\ol(2k+1)$\,. Hence, the dual pair
is a CR quaternionic vector space and its holomorphic vector bundle is $2\ol(-2k-1)$\,.
\end{exm}

\indent
Also, by \cite{fq_1}\,, any (co-)CR quaternionic vector space is isomorphic to a product, unique up to the order of factors,
in which each factor is given by Example \ref{exm:V_k}(1) or~(2)\,.

\begin{defn}
A \emph{linear $f$-quaternionic structure} on a vector space $U$ is a pair $(E,V)$\,, where
$E$ is a quaternionic vector space such that $U,V\subseteq E$, $E=U\oplus V$ and $J(V)\subseteq U$, for any $J\in Z$.\\
\indent
An \emph{$f$-quaternionic vector space} is a vector space endowed with a linear $f$-quaternionic structure.
\end{defn}

\indent
Let $(U,E,V)$ be an $f$-quaternionic vector space; denote by $\iota:U\to E$ the inclusion
and by $\r:E\to U$ the projection determined by the decomposition $E=U\oplus V$.\\
\indent
Then $(E,\iota)$ and $(E,\r)$ are linear CR-quaternionic and co-CR quaternionic structures, respectively, which are \emph{compatible}.\\
\indent
The \emph{$f$-quaternionic linear maps} are defined, accordingly, by using the compatible linear CR and co-CR quaternionic
structures determining a linear $f$-quaternionic structure.\\
\indent
{}From any $f$-quaternionic vector space $(U,E,V)$\,, with $\dim\!E=4k$, $\dim\!V=l$\,,
there exists an $f$-quaternionic linear isomorphism onto $({\rm Im}\Hq\!)^l\times\Hq^{\!4k-l}$
(this follows, for example, from the classification of (co-)CR quaternionic vector spaces \cite{fq_1}\,).\\
\indent
We end this section with the description of the Lie group $G$ of $f$-quaternionic linear isomorphisms
of $({\rm Im}\Hq\!)^l\times\Hq^{\!m}$. For this, let $\r_k:{\rm Sp}(1)\cdot{\rm GL}(k,\Hq)\to{\rm SO}(3)$
be the Lie group morphism defined by $\r_k(q\cdot A)=\pm q$\,, for any
$q\cdot A\in{\rm Sp}(1)\cdot{\rm GL}(k,\Hq)$\,, $(k\geq1)$\,. Denote
$$H=\bigl\{(A,A')\in\bigl({\rm Sp}(1)\cdot{\rm GL}(l,\Hq)\bigr)\times\bigl({\rm Sp}(1)\cdot{\rm GL}(m,\Hq)\bigr)\,|\,
\r_l(A)=\r_m(A')\,\bigr\}\;.$$
Then $H$ is a closed subgroup of ${\rm Sp}(1)\cdot{\rm GL}(l+m,\Hq)$ and $G$ is the closed subgroup of $H$
formed of those elements $(A,A')\in H$ such that $A$ preserves $\R^l\subseteq\Hq^{\!l}$. This follows
from the fact that there are no nontrivial $f$-quaternionic linear maps from ${\rm Im}\Hq$ to $\Hq$
(and from $\Hq$ to ${\rm Im}\Hq$). Now, the canonical basis of ${\rm Im}\Hq$ induces a linear isomorphism
$({\rm Im}\Hq\!)^l=\bigl(\R^l\bigr)^{\!3}$
and, therefore, an effective action $\s$ of ${\rm GL}(l,\R)$ on $({\rm Im}\Hq\!)^l$.
We define an effective action of ${\rm GL}(l,\R)\times\bigl({\rm Sp}(1)\cdot{\rm GL}(m,\Hq)\bigr)$
on $({\rm Im}\Hq\!)^l\times\Hq^{\!m}$ by
$$(A,q\cdot B)(X,Y)=\bigl(q\bigl(\s(A)(X)\bigr)q^{-1},q\,YB^{-1}\bigr)\;,$$
for any $A\in{\rm GL}(l,\R)$\,, $q\cdot B\in{\rm Sp}(1)\cdot{\rm GL}(m,\Hq)$\,, $X\in({\rm Im}\Hq\!)^l$
and $Y\in\Hq^{\!m}$.

\begin{prop} \label{prop:f-q_group}
There exists an isomorphism of Lie groups
$$G={\rm GL}(l,\R)\times\bigl({\rm Sp}(1)\cdot{\rm GL}(m,\Hq)\bigr)\;,$$
given by $(A,A')\mapsto(A|_{\R^l},A')$\,, for any $(A,A')\in G$.\\
\indent
In particular, the group of $f$-quaternionic linear isomorphisms of $({\rm Im}\Hq\!)^l$ is isomorphic to
${\rm GL}(l,\R)\times{\rm SO}(3)$\,.
\end{prop}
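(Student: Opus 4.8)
The plan is to show that the stated map $\Phi\colon(A,A')\mapsto(A|_{\R^l},A')$ is a well-defined Lie group homomorphism and then to exhibit an explicit two-sided inverse built from the action displayed just before the statement; since a set-theoretic inverse of a homomorphism is automatically a homomorphism, this delivers the asserted isomorphism. First, $\Phi$ is well defined and multiplicative: for $(A,A')\in G$ the factor $A\in{\rm Sp}(1)\cdot{\rm GL}(l,\Hq)$ preserves $\R^l$ by the very definition of $G$, so $A|_{\R^l}\in{\rm GL}(l,\R)$, while $A'\in{\rm Sp}(1)\cdot{\rm GL}(m,\Hq)$; as the group law on $G\subseteq H$ is componentwise and both restriction to an invariant subspace and projection onto the second factor are multiplicative, $\Phi$ is a smooth homomorphism into ${\rm GL}(l,\R)\times\bigl({\rm Sp}(1)\cdot{\rm GL}(m,\Hq)\bigr)$.

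The technical heart is to determine the precise shape of an $A\in{\rm Sp}(1)\cdot{\rm GL}(l,\Hq)$ that preserves $\R^l$. Writing $A$ as the class of $(a,C)$ acting by $x\mapsto a\,x\,C^{-1}$, the requirement $A(\R^l)\subseteq\R^l$, tested on real vectors, forces $C^{-1}=\bar a\,P_0$ for a real matrix $P_0$, and then $a\,v\,\bar a\,P_0=v\,P_0$ for $v\in\R^l$, so $A|_{\R^l}=P_0$ and $A$ is the class of $(a,P_0^{-1}a)$. I would highlight that this class is independent of the sign of $a$, since $(-a,-P_0^{-1}a)$ differs from $(a,P_0^{-1}a)$ exactly by the central element $(-1,-I)$; this is the step I expect to be the main obstacle, because it is where the interplay between the left ${\rm Sp}(1)$-factor and the real structure on $\R^l$ must be untangled, everything afterwards being bookkeeping. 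Using it, I would define $\Psi(P_0,q\cdot B):=\bigl(q\cdot(P_0^{-1}q),\,q\cdot B\bigr)$ and check it lands in $G$: the first entry has exactly the above shape, so it preserves $\R^l$ and restricts to $P_0$, and its $\r_l$-image is $\pm q=\r_m(q\cdot B)$, whence the pair lies in $H$ and in $G$. This $\Psi$ is precisely the homomorphism induced by the effective action defined above.

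Finally I would verify that $\Phi$ and $\Psi$ are mutually inverse. The composite $\Phi\circ\Psi$ is the identity by inspection, since $\bigl(q\cdot(P_0^{-1}q)\bigr)|_{\R^l}=P_0$ and the second entry is untouched. For $\Psi\circ\Phi$, given $(A,A')\in G$ with $A|_{\R^l}=P_0$ and ${\rm Sp}(1)$-part $a$, the defining relation $\r_l(A)=\r_m(A')$ shows that the ${\rm Sp}(1)$-part $q$ of $A'$ satisfies $q=\pm a$; then $\Psi(\Phi(A,A'))=\bigl(q\cdot(P_0^{-1}q),A'\bigr)$, and by the sign-independence noted above together with $A=a\cdot(P_0^{-1}a)$ the first entry equals $A$, so the composite is the identity. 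Hence $\Phi$ is bijective with smooth inverse $\Psi$, i.e.\ a Lie group isomorphism. The concluding assertion is the case $m=0$, where ${\rm Sp}(1)\cdot{\rm GL}(0,\Hq)={\rm Sp}(1)/\{\pm1\}={\rm SO}(3)$, giving $G\cong{\rm GL}(l,\R)\times{\rm SO}(3)$.
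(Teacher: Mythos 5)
Your proof is correct and follows what is essentially the paper's (implicit) argument: the proposition is stated without proof, and your inverse $\Psi$ is exactly the homomorphism induced by the effective action of ${\rm GL}(l,\R)\times\bigl({\rm Sp}(1)\cdot{\rm GL}(m,\Hq)\bigr)$ displayed just before the statement. The computation identifying the elements of ${\rm Sp}(1)\cdot{\rm GL}(l,\Hq)$ preserving $\R^l$ as the classes $a\cdot(P_0^{-1}a)$ — including the sign ambiguity in the ${\rm Sp}(1)$-factor — is precisely the detail the paper leaves to the reader, and you handle it correctly.
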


\indent
Note that, the group of $f$-quaternionic linear isomorphisms of ${\rm Im}\Hq\!$ is ${\rm CO}(3)$\,.

\section{A few basic facts on CR quaternionic manifolds} \label{section:cr_q}

\indent
In this section we recall, for the reader's convenience, a few basic facts on CR quaternionic manifolds
(we refer to \cite{fq_1} for further details).\\
\indent
A (smooth) \emph{bundle of associative algebras} is a vector bundle whose typical fibre
is a (finite-dimensional) associative algebra and whose structural group is the group of automorphisms
of the typical fibre. Let $A$ and $B$ be bundles of associative algebras. A morphism of vector bundles $\r:A\to B$ is called
a \emph{morphism of bundles of associative algebras} if $\r$ restricted to each fibre is a morphism of
associative algebras.\\
\indent
Recall that a \emph{quaternionic vector bundle} over a manifold $M$ is a real vector bundle $E$ over $M$ endowed
with a pair $(A,\r)$ where $A$ is a bundle of associative algebras, over $M$, with typical fibre
$\Hq$ and $\r:A\to{\rm End}(E)$ is a morphism of bundles of associative algebras; we say that $(A,\r)$
is a \emph{linear quaternionic structure on $E$} (see~\cite{Bon}\,). Standard arguments (see \cite{IMOP}\,)
apply to show that a quaternionic vector bundle of (real) rank $4k$ is just
a (real) vector bundle endowed with a reduction of its structural group to ${\rm Sp}(1)\cdot{\rm GL}(k,\Hq)$\,.\\
\indent
If $(A,\r)$ defines a linear quaternionic structure on a vector bundle $E$ then we denote $Q=\r({\rm Im}A)$\,, and
by $Z$ the sphere bundle of $Q$.\\
\indent
Recall \cite{Sal-dg_qm} (see \cite{IMOP}\,) that, a manifold is \emph{almost quaternionic} if and only if its
tangent bundle is endowed with a linear quaternionic structure.

\begin{defn} \label{defn:almost_cr_q}
Let $E$ be a quaternionic vector bundle on a manifold $M$ and let $\iota:TM\to E$ be an injective
morphism of vector bundles. We say that $(E,\iota)$ is an \emph{almost CR quaternionic structure} on $M$ if $(E_x,\iota_x)$ is a linear
CR quaternionic structure on $T_xM$, for any $x\in M$.\\
\indent
An \emph{almost CR quaternionic manifold} is a manifold endowed with an almost CR quaternionic structure.
\end{defn}

\indent
On any almost CR quaternionic manifold $(M,E,\iota)$ for which $E$ is endowed with a connection $\nabla$,
compatible with its linear quaternionic structure, there can be
defined a natural almost twistorial structure, as follows.
For any $J\in Z$, let $\mathcal{B}_J\subseteq T^{\C}_J\!Z$ be the horizontal lift, with respect to $\nabla$,
of $\iota^{-1}\bigl(E^J\bigr)$, where $E^J\subseteq E^{\C}_{\p(J)}$ is the eigenspace of $J$ corresponding
to $-{\rm i}$\,. Define $\Cal_J=\mathcal{B}_J\oplus({\rm ker}\dif\!\p)^{0,1}_J$, $(J\in Z)$\,.
Then $\Cal$ is an almost CR structure on $Z$ and $(Z,M,\p,\Cal)$ is
\emph{the almost twistorial structure of $(M,E,\iota,\nabla)$}.

\begin{defn} \label{defn:cr_q}
An \emph{(integrable almost) CR quaternionic structure} on $M$ is a triple $(E,\iota,\nabla)$\,,
where $(E,\iota)$ is an almost CR quaternionic structure on $M$ and $\nabla$ is an almost quaternionic connection
of $(M,E,\iota)$ such that the almost twistorial structure of $(M,E,\iota,\nabla)$ is integrable
(that is, $\Cal$ is integrable). Then $(M,E,\iota,\nabla)$ is a \emph{CR quaternionic manifold} and
the CR manifold $(Z,C)$ is its \emph{twistor space}.
\end{defn}

\indent
A main source of CR quaternionic manifolds is provided by the submanifolds of quaternionic manifolds.

\begin{defn} \label{defn:realiz_cr_q}
Let $(M,E,\iota,\nabla)$ be a CR quaternionic manifold and let $(Z,\Cal)$ be its twistor space. We say that $(M,E,\iota,\nabla)$ is \emph{realizable} if $M$ is an embedded submanifold of a quaternionic manifold $N$
such that $E=TN|_M$, as quaternionic vector bundles, and $\Cal=T^{\C}\!Z\cap(T^{0,1}Z_N)|_M$, where $Z_N$ is the twistor space of $N$.\\
\indent
Then $N$ is \emph{the heaven space} of $(M,E,\iota,\nabla)$\,.
\end{defn}

\indent
By \cite[Corollary 5.4]{fq_1}\,, any real-analytic CR quaternionic manifold is realizable.

\section{Co-CR quaternionic manifolds} \label{section:co-cr_q}

\indent
An \emph{almost co-CR structure} on a manifold $M$ is a complex vector subbundle $\Cal$ of $T^{\C\!}M$ such that
$\Cal+\overline{\Cal}=T^{\C\!}M$. An \emph{(integrable almost) co-CR structure} is an almost co-CR structure whose
space of sections is closed under the bracket.\\
\indent
Note that, if $\phi:M\to(N,J)$ is a submersion onto a complex manifold then $(\dif\!\phi)^{-1}\bigl(T^{0,1}N\bigr)$
is a co-CR structure on $M$; moreover, any co-CR structure is, locally, of this form.

\begin{defn} \label{defn:almost_co-cr_q}
Let $E$ be a quaternionic vector bundle on a manifold $M$ and let $\r:E\to TM$ be a surjective morphism
of vector bundles. Then $(E,\r)$ is called an \emph{almost co-CR quaternionic structure}, on $M$, if $(E_x,\r_x)$ is a
linear co-CR quaternionic structure on $T_xM$, for any $x\in M$.
If, further, $E$ is a hypercomplex vector bundle then $(E,\r)$ is called an \emph{almost hyper-co-CR structure} on $M$. An \emph{almost co-CR quaternionic manifold} (\emph{almost hyper-co-CR manifold}) is a manifold endowed with
an almost co-CR quaternionic structure (almost hyper-co-CR structure).
\end{defn}

\indent
Any almost co-CR quaternionic (hyper-co-CR) structure $(E,\r)$ for which $\r$ is an isomorphism is an almost
quaternionic (hypercomplex) structure.

\begin{exm} \label{exm:3d_almost}
Let $(M,c)$ be a three-dimensional conformal manifold and let $L=\bigl(\Lambda^3TM\bigr)^{1/3}$ be the line bundle of $M$.
Then, $E=L\oplus TM$ is an oriented vector bundle of rank four endowed
with a (linear) conformal structure such that $L=(TM)^{\perp}$. Therefore $E$ is a quaternionic vector bundle
and $(M,E,\r)$ is an almost co-CR quaternionic manifold, where $\r:E\to TM$ is the projection. Moreover, any
three-dimensional almost co-CR quaternionic manifold is obtained this way.
\end{exm}

\indent
Next, we are going to introduce a natural almost twistorial structure
(see \cite{LouPan-II} for the definition of almost twistorial structures)
on any almost co-CR quaternionic manifold
$(M,E,\r)$ for which $E$ is endowed with a connection $\nabla$ compatible with its linear quaternionic structure.\\
\indent
For any $J\in Z$, let $\Cal_J\subseteq T^{\C}_J\!Z$ be the direct sum of $({\rm ker}\dif\!\p)_J^{0,1}$
and the horizontal lift, with respect to $\nabla$, of $\r(E^J)$, where $E^J$ is the eigenspace of $J$
corresponding to $-{\rm i}$\,. Then $\Cal$ is an almost co-CR structure on $Z$ and
$(Z,M,\p,\Cal)$ is \emph{the almost twistorial structure of $(M,E,\r,\nabla)$}.\\
\indent
The following definition is motivated by \cite[Remark 2.10(2)\,]{IMOP}\,.

\begin{defn} \label{defn:co-cr_q}
An \emph{(integrable almost) co-CR quaternionic manifold} is an almost
\mbox{co-CR quaternionic} manifold $(M,E,\r)$ endowed with a compatible connection $\nabla$ on $E$ such that
the associated almost twistorial structure $(Z,M,\p,\Cal)$ is integrable (that is, $\Cal$ is integrable).
If, further, $E$ is a hypercomplex vector bundle and the connection induced by $\nabla$ on $Z$ is trivial then
$(M,E,\r,\nabla)$ is an \emph{(integrable almost) hyper-co-CR manifold}.
\end{defn}

\begin{exm}
Let $(M,c)$ be a three-dimensional conformal manifold and let $(E,\r)$ be the corresponding almost co-CR structure,
where $E=L\oplus TM$ with $L$ the line bundle of $M$. Let $D$ be a Weyl connection on $(M,c)$ and
let $\nabla=D^L\oplus D$\,, where $D^L$ is the connection induced by $D$ on $L$.
It follows that $(M,E,\r,\nabla)$ is co-CR quaternionic if and only if $(M,c,D)$ is Einstein--Weyl
(that is, the trace-free symmetric part of the Ricci tensor of $D$ is zero).\\
\indent
Furthermore, let $\mu$ be a section of $L^*$ such that the connection defined by $$D^{\mu}_{\,X}Y=D_XY+\mu\,X\times_cY\;$$
for any vector fields $X$ and $Y$ on $M$, induces a flat connection on $L^*\otimes TM$.
Then $(M,E,\iota,\nabla^{\mu})$ is, locally, a hyper-co-CR manifold,
where $\nabla^{\mu}=(D^{\mu})^L\oplus D^{\mu}$\,, with $(D^{\mu})^L$ the connection
induced by $D^{\mu}$ on $L$ (this follows from well-known results; see \cite{LouPan-II} and the references therein).
\end{exm}

\indent
Let $\t=(Z,M,\p,\Cal)$ be the twistorial structure of a co-CR quaternionic manifold
$(M,E,\r,\nabla)$\,. Recall \cite{LouPan-II} that $\t$ is \emph{simple} if and only if $\Cal\cap\overline{\Cal}$ is a simple foliation
(that is, its leaves are the fibres of a submersion) whose leaves intersect each fibre of $\p$ at most once.
Then $\bigl(T,\dif\!\phi(\Cal)\bigr)$ is the \emph{twistor space} of $\t$, where $\phi:Z\to T$ is the submersion
whose fibres are the leaves of $\Cal\cap\overline{\Cal}$.

\begin{exm}
Any co-CR quaternionic vector space is a co-CR quaternionic manifold, in an obvious way; moreover,
the associated twistorial structure is simple and its twistor space is just its holomorphic vector bundle.
\end{exm}

\begin{thm} \label{thm:co-cr_q}
Let $(M,E,\r,\nabla)$ be a co-CR quaternionic manifold, $\rank E=4k$\,, $\rank({\rm ker}\,\r)=l$\,. If the twistorial structure of $(M,E,\r,\nabla)$ is simple then it is real analytic and its twistor space
is a complex manifold of dimension $2k-l+1$ endowed with a locally complete family of complex projective lines $\{Z_x\}_{x\in M^{\C}}$.
Furthermore, for any $x\in M$, the normal bundle of the corresponding twistor line $Z_x$ is the holomorphic vector bundle of $(T_xM,E_x,\r_x)$\,.
\end{thm}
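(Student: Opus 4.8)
The plan is to produce the twistor space directly from the integrable co-CR structure $\Cal$ via the simplicity hypothesis, to recognise the twistor lines as the $\p$-fibres with their tautological complex structure, and to compute their normal bundles; completeness and real analyticity will then follow from the classification of the linear models together with Kodaira's deformation theory.

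First I would record the ranks. For each $J\in Z$ the co-CR condition $\ker\r\cap J(\ker\r)=\{0\}$ forces the complexified $\r$ to be injective on the $-\mathrm i$-eigenspace $E^J$: if $a+\mathrm ib\in E^J\cap(\ker\r)^{\C}$ with $a,b\in\ker\r$, then $Ja=b$ and $Jb=-a$, so $b\in\ker\r\cap J(\ker\r)=\{0\}$ and hence $a=b=0$. Thus $\r(E^J)$ has complex dimension $2k$, and since $(\ker\dif\!\p)^{0,1}$ has rank one, $\Cal$ has rank $2k+1$. A short computation shows that $\Cal\cap\overline{\Cal}$ is purely horizontal and projects isomorphically, under $\dif\!\p$, onto $\r(E^J)\cap\r(E^{-J})$; as $\r(E^J)+\r(E^{-J})=(T_xM)^{\C}$ this intersection has dimension $l$. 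By simplicity, $\phi\colon Z\to T$ is a submersion with $\ker\dif\!\phi=\Cal\cap\overline{\Cal}$ and $\dif\!\phi(\Cal)=T^{0,1}T$; the integrability of $\Cal$ is precisely the integrability of this complex structure, so $T$ is a complex manifold of dimension $\rank\Cal-\rank(\Cal\cap\overline{\Cal})=2k-l+1$.

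Next I would analyse the twistor lines. The fibre $\p^{-1}(x)$ is the sphere of admissible complex structures of $E_x$, carrying its canonical structure of a projective line, and $(\ker\dif\!\p)^{0,1}\subseteq\Cal$ makes $\phi|_{\p^{-1}(x)}$ holomorphic. Because $\Cal\cap\overline{\Cal}$ is horizontal it meets the vertical bundle trivially, so this restriction is an immersion, and simplicity makes it injective; hence $Z_x:=\phi\bigl(\p^{-1}(x)\bigr)$ is an embedded complex projective line. Its normal bundle is $N_{Z_x}=\overline{\Cal}\big/\bigl((\ker\dif\!\p)^{1,0}+(\Cal\cap\overline{\Cal})\bigr)$; quotienting the horizontal part of $\overline{\Cal}$, which is the lift of $\r(E^{-J})$, first by $\r(E^J)\cap\r(E^{-J})$ and then applying the second isomorphism theorem, gives at each $J$
$$N_{Z_x,J}\cong\r(E^{-J})\big/\bigl(\r(E^J)\cap\r(E^{-J})\bigr)\cong(T_xM)^{\C}\big/\r(E^J)=\mathcal{U}_J\,.$$
The main obstacle I anticipate is to upgrade this fibrewise identification to a \emph{holomorphic} isomorphism $N_{Z_x}\cong\mathcal{U}$: one must verify that the holomorphic structure induced on $N_{Z_x}$ by the embedding $Z_x\subset T$ agrees, under the above map, with the $\overline{\partial}$-operator defining $\mathcal{U}={\rm coker}\,u$ in the functor $(U,E)\mapsto\mathcal{U}$; here the compatible connection $\nabla$, entering through the horizontal lift, is the bridge between the two, and this is the computational heart of the proof.

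Finally I would invoke deformation theory. By Example \ref{exm:V_k} and the classification following it, $\mathcal{U}$ restricted to $Z_x\cong\C P^1$ is a direct sum of line bundles $\ol(m_i)$ with every $m_i\geq1$ and total degree $2k$; hence $H^1(Z_x,N_{Z_x})=0$ and $h^0(Z_x,N_{Z_x})=\deg\mathcal{U}+\rank\mathcal{U}=2k+(2k-l)=4k-l=\dim M$. Kodaira's theorem then provides a locally complete analytic family $\{Z_x\}_{x\in M^{\C}}$ of such projective lines, parametrised by a complex manifold $M^{\C}$ of dimension $4k-l$ containing $M$ as a real form, with $N_{Z_x}$ the holomorphic vector bundle of $(T_xM,E_x,\r_x)$. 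The complex analyticity of $T$ and of $M^{\C}$, with $M$ realised as a real slice, then yields the asserted real analyticity of the whole structure, as for simple twistorial structures in general (cf.\ \cite{LouPan-II}\,).
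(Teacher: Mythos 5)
Your proposal follows essentially the same route as the paper: pass to the leaf space of $\Cal\cap\overline{\Cal}$, identify the induced complex structure of dimension $2k-l+1$ and the lines $Z_x=\phi(\p^{-1}(x))$ with normal bundle the holomorphic vector bundle of $(T_xM,E_x,\r_x)$, and conclude via Kodaira's completeness theorem (the paper additionally cites Rossi's result for the passage to the complexification $M^{\C}$ and the real-analyticity claim). The rank counts and the $H^1$-vanishing you supply are precisely the details the paper delegates to the classification of (co-)CR quaternionic vector spaces, and the one step you flag without carrying out --- the holomorphic identification of the normal bundle with $\mathcal{U}$ --- is likewise only asserted, not proved, in the paper.
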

\begin{proof}
Let $(Z,M,\p,\Cal)$  be the twistorial structure of $(M,E,\r,\nabla)$\,. Let $\phi:Z\to T$ be the submersion whose fibres are the leaves of
$\Cal\cap\overline{\Cal}$. Obviously, $\dif\!\phi(\Cal)$ defines a complex structure on $T$ of dimension $2k-l+1$\,.
Furthermore, if for any $x\in M$ we denote $Z_x=\phi(\p^{-1}(x))$ then $Z_x$ is a complex submanifold of $T$
whose normal bundle is the holomorphic vector bundle of $(T_xM,E_x,\r_x)$\,. The proof follows from \cite{Kod} and
\cite[Proposition 2.5]{Ro-LeB_nonrealiz}\,.
\end{proof}

\begin{prop}
Let $(M,E,\r,\nabla)$ be a co-CR quaternionic manifold whose twistorial structure is simple; denote by $\phi:Z\to T$
the corresponding holomorphic submersion onto its twistor space. Then $(M,E,\r,\nabla)$ is hyper-co-CR if and only if there exists a surjective holomorphic submersion $\psi:T\to\C\!P^1$
such that the fibres of $\psi\circ\phi$ are integral manifolds of the connection induced by\/ $\nabla$ on $Z$\,.
\end{prop}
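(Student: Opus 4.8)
The plan is to prove both implications by comparing the composite $\psi\circ\phi\colon Z\to\C\!P^1$ with the second projection of a suitable trivialisation $Z=M\times\C\!P^1$. First I would record one general fact. Write $(Z,M,\p,\Cal)$ for the twistorial structure; since $\p\colon Z\to M$ is the sphere bundle of $Q=\r(\mathrm{Im}\,A)$, the connection $\nabla$ induces a connection on $Z$, and I denote by $\mathcal{H}\subseteq TZ$ its horizontal distribution, of rank $\dim M$. The foliation $\Cal\cap\overline{\Cal}$, whose leaves are the fibres of $\phi$, is always horizontal: from $\Cal_J=(\ker\dif\!\p)^{0,1}_J\oplus\mathcal{H}\bigl(\r(E^J)\bigr)$ and the fact that $(\ker\dif\!\p)^{0,1}_J\cap(\ker\dif\!\p)^{1,0}_J=0$ in the (complex one‑dimensional) fibre, every vector of $\Cal_J\cap\overline{\Cal}_J$ has vanishing vertical component, so $\Cal\cap\overline{\Cal}\subseteq\mathcal{H}^{\C}$.

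For the forward implication I would assume $(M,E,\r,\nabla)$ hyper‑co‑CR, so $E$ is hypercomplex and the induced connection on $Z$ is trivial; hence $Z=M\times\C\!P^1$, with $\mathcal{H}$ the horizontal distribution of the product connection (its leaves being the slices $M\times\{J\}$) and with each fibre $\{x\}\times\C\!P^1$ carrying its standard complex structure. Let $\mathrm{pr}\colon M\times\C\!P^1\to\C\!P^1$ be the projection. Since the fibres of $\phi$ are connected and horizontal, each lies in a single slice $M\times\{J\}$, so $\mathrm{pr}$ is constant along them and factorises as $\mathrm{pr}=\psi\circ\phi$ for a unique smooth $\psi\colon T\to\C\!P^1$. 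Because $\dif\!\mathrm{pr}$ kills $\mathcal{H}$ and is fibrewise biholomorphic, one gets $\dif\!\psi\bigl(\dif\!\phi(\Cal)\bigr)=\dif\!\mathrm{pr}\bigl((\ker\dif\!\p)^{0,1}\bigr)\subseteq T^{0,1}\C\!P^1$; as $\dif\!\phi(\Cal)=T^{0,1}T$, this shows $\psi$ is holomorphic. It is a surjective submersion since $\mathrm{pr}$ is, and the fibres of $\psi\circ\phi=\mathrm{pr}$ are the slices $M\times\{J\}$, that is, the integral manifolds of the trivial connection.

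For the converse I would start from a surjective holomorphic submersion $\psi\colon T\to\C\!P^1$ with the stated property and set $g=\psi\circ\phi$. Then $g$ is a surjective submersion whose fibres are tangent to $\mathcal{H}$, and comparing ranks gives $\ker\dif\!g=\mathcal{H}$; in particular $\dif\!g$ is injective on each $\p$‑fibre, so $g|_{\p^{-1}(x)}\colon\p^{-1}(x)\to\C\!P^1$ is an immersion of two‑spheres, hence a covering, and, being $\psi$ composed with the biholomorphism $\phi|_{\p^{-1}(x)}$ onto the twistor line $Z_x$ (Theorem \ref{thm:co-cr_q} together with simplicity), it is holomorphic; an unramified holomorphic self‑map of $\C\!P^1$ is a biholomorphism. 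Thus $(\p,g)\colon Z\to M\times\C\!P^1$ is a fibrewise biholomorphic diffeomorphism under which $\mathcal{H}=\ker\dif\!g$ becomes the distribution with leaves $M\times\{J\}$; these leaves are global parallel sections of $Z\to M$, one through each point, so the connection induced on $Z$ is flat and trivialised by parallel sections, i.e. trivial. Finally, parallel transporting an orthonormal frame of $Q$ from a single point produces a global parallel hypercomplex structure on $E$ inducing the given quaternionic one, so $E$ is hypercomplex and $(M,E,\r,\nabla)$ is hyper‑co‑CR.

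The hard part will be the last step of the converse: the hypothesis only yields a \emph{smooth} bundle trivialisation $(\p,g)$ of $Z$, and to obtain a genuine hyper‑co‑CR structure I must promote it to a parallel $\mathrm{SO}(3)$‑trivialisation. This is where the assumption that the fibres of $g$ be \emph{integral manifolds of $\nabla$} — not merely a fibration of $Z$ — is essential: combined with the compatibility of $\nabla$ with the metric on $Q$, it forces parallel transport to match the fibrewise identifications supplied by $g$, upgrading them to a parallel frame and hence to an actual (parallel) hypercomplex structure. By contrast, the forward direction is routine once the horizontality of $\Cal\cap\overline{\Cal}$ is in hand.
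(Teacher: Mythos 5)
Your argument is correct, and in substance it rests on the same pivot as the paper's proof --- the horizontal distribution $\H$ induced by $\nabla$ on $Z$ --- but it is organised differently. The paper works downstairs on $T$: it observes that $\H$ is integrable iff $\dif\!\phi(\H)$ is a holomorphic foliation on $T$, and that this foliation is simple (which is what the existence of $\psi$ amounts to, the leaf space being identified with $\C\!P^1$ via a twistor line) iff $E$ is hypercomplex and $\H$ is trivial; all of this is asserted in two sentences. You instead work entirely upstairs on $Z$ with the map $g=\psi\circ\phi$ and the trivialisation $(\p,g)$, and you supply exactly the steps the paper leaves to the reader: the horizontality of $\Cal\cap\overline{\Cal}$, the fibrewise argument (an unramified holomorphic map of $\C\!P^1$ to itself has degree one), and the descent from trivial holonomy on the sphere bundle $Z$ to a parallel oriented orthonormal frame of $Q$ (using that ${\rm SO}(3)$ acts effectively on $S^2$), hence to a hypercomplex structure on $E$ for which the induced connection on $Z$ is trivial. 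One remark on your closing paragraph: the step you single out as ``the hard part'' is already settled by your own converse argument and needs no appeal to a further promotion --- since the fibres of $g$ are by hypothesis integral manifolds of $\H$ and, by your fibrewise biholomorphism argument, are sections of $\p$, the trivialisation $(\p,g)$ is automatically parallel; the only remaining point is the passage from $Z$ to $Q$ just mentioned.
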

\begin{proof}
Denote by $\H$ the connection induced by $\nabla$ on $Z$\,. Then $\H$ is integrable if and only if $\dif\!\phi(\H)$ is
a holomorphic foliation on $T$; furthermore, this foliation is simple if and only if $E$ is hypercomplex and $\H$ is the
trivial connection on~$Z$\,.
\end{proof}

\section{$f$-Quaternionic manifolds} \label{section:f_q}

\indent
Let $F$ be an almost $f$-structure on a manifold $M$; that is, $F$ is a field of endomorphisms of $TM$ such that $F^3+F=0$\,.
Denote by $\Cal$ the eigenspace of $F$ with respect to $-{\rm i}$ and let $\mathcal{D}=\Cal\oplus{\rm ker}F$. Then
$\Cal$ and $\mathcal{D}$ are \emph{compatible} almost CR and almost co-CR structures, respectively. An \emph{(integrable almost) $f$-structure} is an almost $f$-structure for which the corresponding almost CR and almost co-CR
structures are integrable.

\begin{defn} \label{defn:f-q}
An \emph{almost $f$-quaternionic structure} on a manifold $M$ is a pair $(E,V)$\,,
where $E$ is a quaternionic vector bundle on $M$ and $TM$ and $V$ are vector subbundles of $E$
such that $E=TM\oplus V$ and $J(V)\subseteq TM$, for any $J\in Z$. An \emph{almost hyper-$f$-structure} on a manifold $M$ is an almost $f$-quaternionic structure $(E,V)$ on $M$
such that $E$ is a hypercomplex vector bundle. An \emph{almost $f$-quaternionic manifold} (\emph{almost hyper-$f$-manifold}) is a manifold endowed with
an almost $f$-quaternionic structure (almost hyper-$f$-structure).
\end{defn}

\indent
With the same notations as in Definition \ref{defn:f-q}\,, an almost $f$-quaternionic structure
(almost hyper-$f$-structure) for which $V$ is the zero bundle is an almost quaternionic structure
(almost hypercomplex structure).\\
\indent
Let $k$ and $l$ be positive integers, $k\geq l$, and denote by $G_{k,l}$ the group of $f$-quaternionic
linear isomorphisms of $({\rm Im}\Hq)^l\times\Hq^{k-l}$. The next result is an immediate consequence of the description of $G_{k,l}$
given in  Section~\ref{section:review_lin_co-CR_q}\,.

\begin{prop} \label{prop:almost_f-q_as_a_reduction}
Let $M$ be a manifold of dimension $4k-l$. Then any almost $f$-quaternionic structure $(E,V)$ on $M$,
with $\rank E=4k$ and $\rank V=l$\,, corresponds to a reduction of the frame bundle of $M$ to $G_{k,l}$\,.\\
\indent
Furthermore, if $(P,M,G_{k,l})$ is the reduction of the frame bundle of $M$, corresponding to $(E,V)$\,,
then $V$ is the vector bundle associated to $P$ through the canonical morphism of Lie groups $G_{k,l}\to{\rm GL}(l,\R)$\,.
\end{prop}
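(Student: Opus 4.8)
The plan is to recognise $(E,V)$ as a classical $G_{k,l}$-structure, by constructing the corresponding reduction of the frame bundle explicitly and then reversing the construction. Write $(U_0,E_0,V_0)=\bigl(({\rm Im}\Hq\!)^l\times\Hq^{k-l},\Hq^{k},\R^l\bigr)$ for the model $f$-quaternionic vector space, so that $\dim U_0=3l+4(k-l)=4k-l=\dim M$ and, by definition, $G_{k,l}$ is its group of $f$-quaternionic linear automorphisms. Unwinding the definition of $f$-quaternionic linear map in terms of the compatible CR and co-CR structures determined by $E_0=U_0\oplus V_0$\,, an $f$-quaternionic linear automorphism of the model is the same as a quaternionic linear automorphism $\widetilde g$ of $E_0$ preserving the decomposition $E_0=U_0\oplus V_0$\,. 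Thus I regard $G_{k,l}$ as the group of such $\widetilde g$, acting on $U_0$ through $\widetilde g\mapsto\widetilde g|_{U_0}$\,; this action is effective by Section \ref{section:review_lin_co-CR_q}\,, giving a faithful representation $G_{k,l}\hookrightarrow{\rm GL}(4k-l,\R)$.

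First I would set $P_x$ to be the set of quaternionic linear isomorphisms $\widetilde p:E_0\to E_x$ carrying $U_0$ onto $T_xM$ and $V_0$ onto $V_x$\,, and $P=\bigsqcup_{x\in M}P_x$\,, with $G_{k,l}$ acting on the right by $\widetilde p\mapsto\widetilde p\circ\widetilde g$\,. Three facts then make $P$ a reduction: $P_x\neq\emptyset$ for every $x$, which is exactly the pointwise classification recalled in Section \ref{section:review_lin_co-CR_q} asserting that $(T_xM,E_x,V_x)$ is $f$-quaternionic linearly isomorphic to $(U_0,E_0,V_0)$\,; $G_{k,l}$ acts simply transitively on each $P_x$, since for $\widetilde p_1,\widetilde p_2\in P_x$ the composite $\widetilde p_2^{\,-1}\circ\widetilde p_1$ is a decomposition-preserving quaternionic automorphism of $E_0$, hence lies in $G_{k,l}$\,, while freeness follows from the effectiveness above; and the map $\widetilde p\mapsto\widetilde p|_{U_0}$ sends $P$ into the frame bundle $L(M)$ equivariantly along $G_{k,l}\hookrightarrow{\rm GL}(4k-l,\R)$ and injectively on fibres (again by effectiveness), so that $P$ is a genuine $G_{k,l}$-subbundle of $L(M)$\,. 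Conversely, from a reduction $(P,M,G_{k,l})$ I would recover the structure as the associated bundles $E=P\times_{G_{k,l}}E_0$ and $V=P\times_{G_{k,l}}V_0$\,: because $G_{k,l}\subseteq{\rm Sp}(1)\cdot{\rm GL}(k,\Hq)$ acts on $E_0=\Hq^k$ by quaternionic linear maps preserving $E_0=U_0\oplus V_0$\,, the bundle $E$ inherits a linear quaternionic structure with $E=TM\oplus V$ and $J(V)\subseteq TM$ for all $J\in Z$; these two assignments are visibly mutually inverse.

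For the second assertion I would trace the $G_{k,l}$-action on the summand $V_0=\R^l\subseteq\Hq^l$\,. By Proposition \ref{prop:f-q_group}\,, the identification $G_{k,l}={\rm GL}(l,\R)\times\bigl({\rm Sp}(1)\cdot{\rm GL}(k-l,\Hq)\bigr)$ sends $(A,A')$ to $(A|_{\R^l},A')$\,, and by the very definition of $G_{k,l}$ each $A$ preserves $\R^l\subseteq\Hq^l$\,; hence the $G_{k,l}$-action on $V_0=\R^l$ is given by $A|_{\R^l}$\,, i.e.\ it factors through the first projection, which is the canonical morphism $G_{k,l}\to{\rm GL}(l,\R)$\,. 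Therefore $V=P\times_{G_{k,l}}V_0$ is precisely the vector bundle associated to $P$ through this morphism, as claimed.

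The smoothness of $P$ and of the two mutually inverse constructions is routine $G$-structure formalism; the points that genuinely carry content, and which I expect to require the most care, are twofold. First, the unwinding in the opening paragraph, ensuring that an $f$-quaternionic frame is \emph{equivalent data} to its underlying linear frame of $TM$ (so that the structure group is exactly $G_{k,l}$ and not something larger), for which the effectiveness of the $G_{k,l}$-action on $U_0$ is indispensable. Second, the bookkeeping of the previous paragraph identifying the action on $V_0$ with the canonical morphism $G_{k,l}\to{\rm GL}(l,\R)$\,, where the key input is once more the explicit form of the isomorphism in Proposition \ref{prop:f-q_group}\,.
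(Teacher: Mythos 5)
Your argument is correct and is exactly the route the paper intends: the paper offers no written proof, declaring the proposition ``an immediate consequence of the description of $G_{k,l}$'' from Section \ref{section:review_lin_co-CR_q}, i.e.\ of the pointwise classification of $f$-quaternionic vector spaces together with Proposition \ref{prop:f-q_group}. Your write-up simply makes explicit the standard frame-bundle bookkeeping (adapted frames, simple transitivity, effectiveness, associated bundles) that the authors leave to the reader.
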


\begin{exm} \label{exm:first-fq's}
1) A three-dimensional almost $f$-quaternionic manifold is just a (three-dimensional) conformal manifold.\\
\indent
2) Let $N$ be an almost quaternionic manifold endowed with a Hermitian metric and let $M$ be a hypersurface in $N$.
Then $\bigl(TN|_M,(TM)^{\perp}\bigr)$ is an almost $f$-quaternionic structure on $M$.
\end{exm}

\indent
Obviously, any almost $f$-quaternionic structures $(E,V)$ on a manifold $M$ corresponds to a pair
$(E,\iota)$ and $(E,\r)$ of almost CR quaternionic and co-CR quaternionic structures on $M$,
where $\iota:TM\to E$ and $\r:E\to TM$ are the inclusion and projection, respectively.

\begin{defn}
Let $(M,E,V)$ be an almost $f$-quaternionic manifold. Let $(E,\iota)$ and $(E,\r)$ be the
almost CR quaternionic and co-CR quaternionic structures, respectively, corresponding to $(E,V)$\,.
Let $\nabla$ be a connection on $E$ compatible with its linear quaternionic structure and let
$\t$ and $\t_c$ be the almost twistorial structures of $(M,E,\iota,\nabla)$
and $(M,E,\r,\nabla)$\,, respectively. We say that $(M,E,V,\nabla)$ is an \emph{$f$-quaternionic manifold} if the almost twistorial
structures $\t$ and $\t_c$ are integrable. If, further, $E$ is hypercomplex and $\nabla$ induces
the trivial flat connection on $Z$ then $(M,E,V,\nabla)$ is an \emph{(integrable almost) hyper-$f$-manifold}.\\
\indent
Let $(M,E,V,\nabla)$ be an $f$-quaternionic manifold and let $Z$ and $Z_c$ be the twistor spaces
of $\t$ and $\t_c$\,, respectively (we assume, for simplicity, that $\t_c$ is simple).
Then $Z$ is called the \emph{CR twistor space} and $Z_c$ is called the \emph{twistor space} of $(M,E,V,\nabla)$\,.
\end{defn}

\indent
Let $(M,E,V)$ be an almost $f$-quaternionic manifold and let $\nabla$ be a connection on $E$ compatible
with its linear quaternionic structure. Let $\Cal$ and $\mathcal{D}$ be the almost CR and almost co-CR structures
on $Z$ determined by $\nabla$ and the underlying almost CR quaternionic and almost co-CR quaternionic structures
of $(M,E,V)$\,, respectively. Then $\Cal$ and $\mathcal{D}$ are compatible; therefore $(M,E,V,\nabla)$ is $f$-quaternionic if and only if
the corresponding almost $f$-structure on $Z$ is integrable.\\
\indent
Let $(M,E,V)$ be an almost $f$-quaternionic manifold, $\rank E=4k$\,, $\rank V=l$\,, and $D$ some compatible connection on $M$
(equivalently, $D$ is a linear connection on $M$ which corresponds to a principal connection on the reduction to $G_{k,l}$\,,
of the frame bundle of $M$, corresponding to $(E,V)$\,). Then $D$ induces a connection $D^V$ on $V$.
Moreover, $\nabla=D^V\oplus D$ is compatible with the linear quaternionic structure on $E$.

\begin{cor} \label{cor:f-q}
Let $(M,E,V,\nabla)$ be an $f$-quaternionic manifold, $\rank E=4k$\,, $\rank V=l$\,, where $\nabla=D^V\oplus D$
for some compatible connection $D$ on $M$. Denote by $\t$ and $\t_c$ the associated twistorial structures. Then, locally, the twistor space of $(M,\t_c)$ is a complex manifold, of complex dimension $2k-l+1$\,, endowed
with a locally complete family of complex projective lines each of which has normal bundle
$2(k-l)\ol(1)\oplus l\,\ol(2)$\,.\\
\indent
Furthermore, if $(M,E,V,\nabla)$ is real analytic then, locally, there exists a twistorial map
from the corresponding heaven space $N$, endowed with its twistorial structure, to $(M,\t_c)$ which is a retraction
of the inclusion $M\subseteq N$.
\end{cor}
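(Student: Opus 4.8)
The first part of Corollary \ref{cor:f-q} is essentially a direct application of Theorem \ref{thm:co-cr_q} to the co-CR quaternionic structure $(E,\r)$ underlying the $f$-quaternionic structure $(M,E,V)$; I would begin by checking that the hypotheses of that theorem are met and that the normal-bundle computation gives the stated answer.

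First I would observe that $(M,E,\r,\nabla)$ is, by hypothesis, a co-CR quaternionic manifold (its associated almost twistorial structure $\t_c$ is integrable), with $\rank E=4k$ and $\rank(\ker\r)=\rank V=l$. Since the statement permits us to work locally, I may assume $\t_c$ is simple (this is automatic on a sufficiently small neighbourhood, as the distribution $\Cal\cap\overline{\Cal}$ integrates to a simple foliation meeting each $\p$-fibre at most once once we shrink $M$). Theorem \ref{thm:co-cr_q} then immediately yields that the twistor space of $(M,\t_c)$ is a complex manifold of dimension $2k-l+1$, carrying a locally complete family of complex projective lines $\{Z_x\}$ whose normal bundle is the holomorphic vector bundle of the linear co-CR quaternionic space $(T_xM,E_x,\r_x)$. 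It remains only to identify that holomorphic vector bundle with $2(k-l)\ol(1)\oplus l\,\ol(2)$.

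For the normal-bundle identification I would invoke the classification of (co-)CR quaternionic vector spaces recalled in Section \ref{section:review_lin_co-CR_q}: the $f$-quaternionic vector space $(T_xM,E_x,V_x)$ is isomorphic to $({\rm Im}\Hq)^l\times\Hq^{k-l}$, so its underlying co-CR quaternionic structure is a product of $l$ copies of the basic factor ${\rm Im}\Hq$ and $(k-l)$ copies of $\Hq$. The factor ${\rm Im}\Hq$ corresponds to $V_1$ of Example \ref{exm:V_k}(1) with $k=1$, whose holomorphic vector bundle is $\ol(2)$, while each quaternionic factor $\Hq$ contributes $2\,\ol(1)$ (its holomorphic vector bundle is the rank-two bundle of the quaternionic line). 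Since the functor $(U,E)\mapsto\mathcal{U}$ takes products to direct sums, the holomorphic vector bundle of $(T_xM,E_x,\r_x)$ is $2(k-l)\ol(1)\oplus l\,\ol(2)$, as claimed.

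For the second part, I would use the realizability of real-analytic CR quaternionic manifolds (\cite[Corollary 5.4]{fq_1}, recalled after Definition \ref{defn:realiz_cr_q}) applied to the CR quaternionic structure $(E,\iota)$ underlying $(M,E,V)$: this embeds $M$ as a hypersurface (codimension $l$) in its heaven space $N$, with $E=TN|_M$ and $\Cal=T^{\C}Z\cap(T^{0,1}Z_N)|_M$. The required twistorial retraction should be constructed on the level of twistor spaces: the holomorphic submersion $\phi:Z\to Z_c$ onto the twistor space of $\t_c$, which realizes the compatibility of $\Cal$ and $\mathcal{D}$ via the integrable $f$-structure on $Z$, extends (by the germ-uniqueness of the embedding $Z\hookrightarrow Z_N$) to a holomorphic submersion $Z_N\to Z_c$ defined near $Z$; its fibres define a holomorphic foliation on $Z_N$ transverse to the twistor lines of the points of $N$, and the induced map on heaven spaces is the desired retraction $N\to M$, twistorial by construction and restricting to the identity on $M$. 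The main obstacle I anticipate is precisely this extension step: one must verify that the foliation $\dif\!\phi(\mathcal{D}\cap\overline{\mathcal{D}})$ on $Z$ extends holomorphically across $Z$ in $Z_N$ and that the resulting quotient map descends to a well-defined retraction on the heaven space, i.e.\ that its fibres meet each twistor line $(Z_N)_y$ for $y\in N$ in a single point, so that the retraction is genuinely a submersion onto $M$ that is the identity on $M$. Establishing this transversality and single-intersection property in a neighbourhood of $M$, together with the real-analytic continuation, is the crux of the argument.
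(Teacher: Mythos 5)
Your proposal is correct and follows essentially the same route as the paper: the first assertion is obtained by shrinking so that $\t_c$ becomes simple and applying Theorem~\ref{thm:co-cr_q}, with the normal bundle then identified through the classification of (co-)CR quaternionic vector spaces (the paper leaves this identification implicit), and the second assertion rests on exactly the fact the paper invokes, namely the existence of a holomorphic submersion from the twistor space of $N$ onto that of $(M,\t_c)$ mapping twistor lines diffeomorphically onto twistor lines. Only a terminological slip: $M$ has codimension $l$ in $N$, so it is a hypersurface only when $l=1$.
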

\begin{proof}
By passing to a convex open set of $D$, if necessary, we may suppose that $\t_c$ is simple. Thus, the first assertion
is a consequence of Theorem \ref{thm:co-cr_q}\,. The second statement follows from the fact that there exists a
holomorphic submersion from the twistor space of $N$,
endowed with its twistorial structure, to the twistor space of $(M,\t_c)$\,, which maps diffeomorphically twistor lines
onto twistor lines.
\end{proof}

\indent
Note that, if $\dim M=3$ then Corollary \ref{cor:f-q} gives results of \cite{LeB-Hspace} and \cite{Hit-complexmfds}\,.

\begin{exm} \label{exm:Gr_3^+}
Let $M^{3l}={\rm Gr}_3^+(l+3,\R)$ be the Grassmann manifold of oriented vector subspaces of dimension $3$
of $\R^{l+3}$, $(l\geq1)$\,. Alternatively, $M^{3l}$ can be defined as the Riemannian symmetric space
${\rm SO}(l+3)/\bigl({\rm SO}(l)\times{\rm SO}(3)\bigr)$\,. As the structural group of the frame bundle of
$M^{3l}$ is ${\rm SO}(l)\times{\rm SO}(3)$\,,
from Proposition \ref{prop:almost_f-q_as_a_reduction} we obtain that $M^{3l}$ is canonically endowed with an almost
$f$-quaternionic structure. Moreover, if we endow $M^{3l}$ with its Levi-Civita connection then we obtain an
$f$-quaternionic manifold. Its twistor space is the hyperquadric $Q_{l+1}$ of isotropic
one-dimensional complex vector subspaces of $\C^{\!l+3}$, considered as the complexification of the
(real) Euclidean space of dimension $l+3$\,. Further, the CR twistor space $Z$ of $M^{3l}$
can be described as the closed submanifold of $Q_{l+1}\times M^{3l}$
formed of those pairs $(\ell,p)$ such that $\ell\subseteq p^{\C}$. Under the orthogonal decomposition $\R^{l+4}=\R\oplus\R^{l+3}$, we can embed
$M^{3l}$ as a totally geodesic submanifold of the quaternionic manifold
$\widetilde{M}^{4l}={\rm Gr}_4^+(l+4,\R)$ as follows: $p\mapsto\R\oplus p$\,, $(p\in M^{3l})$\,.
Recall (see \cite{LeB-twist_qK}\,) that the twistor space of $\widetilde{M}^{4l}$ is the manifold
$\widetilde{Z}={\rm Gr}_2^0(l+4,\C)$ of isotropic complex vector subspaces of dimension $2$ of $\C^{\!l+4}$,
where the projection $\widetilde{Z}\to\widetilde{M}$ is given by $q\mapsto p$\,, with $q$ a self-dual subspace
of $p^{\C}$ (in particular, $p^{\C}=q\oplus\overline{q}$). Consequently, the CR twistor space $Z$ of $M^{3l}$
can be embedded in $\widetilde{Z}$ as follows:
$(\ell,p)\mapsto q$\,, where $q$ is the unique self-dual subspace of $(\R\oplus p)^{\C}$ which intersects
$p^{\C}$ along $\ell$\,.\\
\indent
In the particular case $l=1$ we obtain the well-known fact (see \cite{BaiWoo2}\,) that the twistor space of $S^3$
is $Q_2\,\bigl(=\C\!P^1\times\C\!P^1\bigr)$\,. Also, the CR twistor space of $S^3$ can be identified with the sphere
bundle of $\ol(1)\oplus\ol(1)$\,. Similarly, the dual of $M^{3l}$ is, canonically, an $f$-quaternionic manifold whose twistor space
is an open set of $Q_{l+1}$\,.
\end{exm}

\begin{exm} \label{exm:Gr_2^0symplectic}
Let ${\rm Gr}_2^0(2n+2,\C\!)$ be the complex hypersurface of the Grassmannian ${\rm Gr}_2(2n+2,\C\!)$ of two-dimensional complex vector subspaces of
$\C^{\!2n+2}\bigl(=\Hq^{\!n+1}\bigr)$ formed of those $q\in{\rm Gr}_2(2n+2,\C\!)$ which are isotropic with respect to the underlying complex
symplectic structure $\o$ of $\C^{\!2n+2}$; note that,
$${\rm Gr}_2^0(2n+2,\C\!)={\rm Sp}(n+1)/\bigl({\rm U}(2)\times{\rm Sp}(n-1)\bigr)\;.$$
\indent
Then ${\rm Gr}_2^0(2n+2,\C\!)$ is a real-analytic $f$-quaternionic manifold and its heaven space is ${\rm Gr}_2(2n+2,\C\!)$.
Its twistor space is ${\rm Gr}_2^0(2n+2,\C\!)$ itself, considered as a complex manifold.\\
\indent
To describe the CR twistor space of ${\rm Gr}_2^0(2n+2,\C\!)$, firstly, recall
that the twistor space of ${\rm Gr}_2(2n+2,\C\!)$ is the flag manifold ${\rm F}_{1,2n+1}(2n+2,\C\!)$
formed of the pairs $(\ell,p)$ with $\ell$ and $p$ complex vector subspaces of $\C^{\!2n+2}$ of dimensions $1$ and $2n+1$\,, respectively,
such that $\ell\subseteq p$\,.\\
\indent
Now, let $Z\subseteq {\rm Gr}_2^0(2n+2,\C\!)\times {\rm Gr}_2^0(2n+2,\C\!)$ be formed of the pairs $(p,q)$ such that
$p\cap q$ and $p\cap q^{\perp}$ are nontrivial and the latter is
contained by the kernel of $\o|_{q^{\perp}}$, where the orthogonal complement is taken with respect to the underlying Hermitian metric of $\C^{\!2n+2}$.
Then the embedding $Z\to{\rm F}_{1,2n+1}(2n+2,\C\!)$\,, $(p,q)\mapsto(p\cap q, q^{\perp}+p\cap q)$ induces a CR structure with respect to which
$Z$ is the CR twistor space of ${\rm Gr}_2^0(2n+2,\C\!)$.\\
\indent
Note that, if $n=1$ we obtain the $f$-quaternionic manifold of Example \ref{exm:Gr_3^+} with $l=2$\,.
\end{exm}

\indent
The next example is related to a construction of \cite{Swa-bundle} (see, also, \cite[Example 4.4]{IMOP}\,).

\begin{exm} \label{exm:SO(Q)}
Let $M$ be a quaternionic manifold, $\nabla$ a quaternionic connection on it and $Z$ its twistor space.\\
\indent
Then $Z$ is the sphere bundle of an oriented Riemannian vector bundle of rank three $Q$.
By extending the structural group of the frame bundle $\bigl({\rm SO}(Q),M,{\rm SO}(3,\R)\bigr)$ of $Q$ we obtain a principal bundle
$\bigl(H,M,\Hq^{\!*}\!/{\mathbb{Z}_2}\bigr)$\,.\\
\indent
Let $q\in S^2\,(\subseteq{\rm Im}\Hq)$\,. The morphism of Lie groups $\C^{\!*}\to\Hq^{\!*}$, $a+b\,{\rm i}\mapsto a-bq$ induces an
action of $\C^{\!*}$ on $H$ whose quotient space is $Z$ (considered with its underlying smooth structure); denote by $\psi_q:H\to Z$
the projection. Moreover, $(H,Z,\C^{\!*})$ is a principal bundle on which $\nabla$ induces a principal connection for which the $(0,2)$ component
of its curvature form is zero. Therefore the complex structures of $Z$ and of the fibres of $H$ induce, through this connection,
a complex structure $J_q$ on $H$.\\
\indent
We, thus, obtain a hypercomplex manifold $(H,J_{\rm i},J_{\rm j},J_{\rm k})$ which is the heaven space of an $f$-quaternionic structure
on ${\rm SO}(Q)$ (in fact, a hyper-$f$ structure). Note that, the twistor space of ${\rm SO}(Q)$ is $\C\!P^1\times Z$ and the corresponding
projection from $S^2\times{\rm SO}(Q)$ onto $\C\!P^1\times Z$ is given by $(q,u)\mapsto\bigl(q,\psi_q(u)\bigr)$\,,
for any $(q,u)\in S^2\times{\rm SO}(Q)$\,.\\
\indent
If $M=\Hq\!P^k$ then the factorisation through $\mathbb{Z}_2$ is unnecessary and we obtain an $f$-quaternionic structure on $S^{4k+3}$
with heaven space $\Hq^{\!k+1}\setminus\{0\}$ and twistor space $\C\!P^1\times\C\!P^{2k+1}$.
\end{exm}

\indent
Let $(M,E,V)$ be an almost $f$-quaternionic manifold, with $\rank V=l$\,,
and $(P,M,G_{k,l})$ the corresponding reduction of the frame bundle of $M$, where $\rank E=4k$\,.
Then $TM=(V\otimes Q)\oplus W$, where $W$ is
the quaternionic vector bundle associated to $P$ through the canonical morphisms of Lie groups
$G_{k,l}\longrightarrow{\rm Sp}(1)\cdot{\rm GL}(k-l,\Hq)$\,. Note that, $W$ is the largest quaternionic
vector subbundle of $E$ contained by $TM$.

\begin{thm} \label{thm:f-q}
Let $(M,E,V)$ be an almost $f$-quaternionic manifold and let $D$ be a compatible torsion free connection, $\rank E=4k$\,,
$\rank V=l$\,; suppose that $(k,l)\neq(2,2)\,,(1,0)$\,. Then $(M,E,V,\nabla)$ is $f$-quaternionic, where $\nabla=D^V\oplus D$.
Moreover, $W$ is integrable and geodesic,
with respect to $D$ (equivalently, $D_XY$ is a section of $W$, for any sections $X$ and\/ $Y$ of $W$).
\end{thm}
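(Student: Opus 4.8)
I would first dispose of the two properties of $W$, since they are essentially formal. As $D$ is a compatible connection, it is induced by a principal connection on the reduction $(P,M,G_{k,l})$, and the splitting $TM=(V\otimes Q)\oplus W$ is precisely the decomposition of $TM$ into the associated subbundles of the two summands $({\rm Im}\Hq)^l$ and $\Hq^{\!k-l}$ of the standard $G_{k,l}$-module; by Proposition \ref{prop:f-q_group} the action of $G_{k,l}={\rm GL}(l,\R)\times\bigl({\rm Sp}(1)\cdot{\rm GL}(k-l,\Hq)\bigr)$ is block diagonal for this splitting. Hence $D$ preserves $W$, so $D_XY$ is a section of $W$ whenever $Y$ is, which is the geodesic assertion. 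Since $D$ is torsion free, for sections $X,Y$ of $W$ we have $[X,Y]=D_XY-D_YX$ with both terms in $W$, so $W$ is integrable.

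For the integrability assertion the plan is to use the criterion recorded just before the theorem: $(M,E,V,\nabla)$ is $f$-quaternionic if and only if the almost $f$-structure on $Z$ built from $\nabla$ and $(E,V)$ is integrable, that is, its $-{\rm i}$-eigenbundle $\Cal$ is closed under the bracket (integrability of $\t$) and $\mathcal{D}=\Cal\oplus\ker F$ is closed under the bracket (integrability of $\t_c$). I would then compute the Nijenhuis-type obstruction tensors of $\Cal$ and $\mathcal{D}$ on $Z$ for $\nabla=D^V\oplus D$ and show that they vanish. In the horizontal--vertical splitting of $TZ$ furnished by $\nabla$, the vertical contributions vanish because the fibres of $\p\colon Z\to M$ are complex curves (projective lines), while the mixed contributions vanish because $\nabla$, being compatible, carries the tautological complex structures horizontally in a parallel way. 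Both obstructions thus reduce to their purely horizontal parts.

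The horizontal part is, as in the purely quaternionic case, the sum of a term linear in the torsion of the connection $D$ induced by $\nabla$ on $TM$ and a term linear in the curvature $R^{\nabla}=R^{D^V}\oplus R^{D}$. Torsion freeness of $D$ kills the first term, leaving the projection of $R^{\nabla}$ onto the submodule recording the $(0,2)$-part, relative to the family $\{J\}_{J\in Z}$, of the $\mathfrak{sp}(1)$-component of the curvature; indeed only this component rotates the admissible complex structures, whereas the $\mathfrak{gl}(k-l,\Hq)$- and $\mathfrak{gl}(l,\R)$-components commute with them. The first Bianchi identity for the torsion-free $D$ then constrains $R^{\nabla}$ to lie in the space $\mathcal{K}(\mathfrak{g}_{k,l})$ of formal curvature tensors of $\mathfrak{g}_{k,l}={\rm Lie}\,G_{k,l}$, so the obstruction is exactly the image of $\mathcal{K}(\mathfrak{g}_{k,l})$ under this projection.

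The crux, and the step I expect to be the real obstacle, is the representation-theoretic computation that this projection vanishes on $\mathcal{K}(\mathfrak{g}_{k,l})$. Branching $\mathcal{K}(\mathfrak{g}_{k,l})$ into irreducibles under $G_{k,l}$, I would expect the target submodule to occur with positive multiplicity precisely in the two excluded cases: $(k,l)=(1,0)$, the four-dimensional almost quaternionic situation, where integrability is the genuinely extra (anti-)self-duality condition, and $(k,l)=(2,2)$, the six-dimensional case with $W=0$ and $TM=V\otimes Q$, $\dim V=2$. For all other $(k,l)$ the target component is absent, the obstruction vanishes identically, and both $\t$ and $\t_c$ are integrable. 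This is the $f$-quaternionic analogue of Salamon's vanishing theorem for almost quaternionic manifolds, and carrying out the branching carefully---keeping track of the extra ${\rm GL}(l,\R)$ factor and the module $V\otimes Q$---is where the main work lies.
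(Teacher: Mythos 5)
Your treatment of the last assertion is correct and arguably cleaner than the paper's: since a compatible $D$ is by definition induced by a principal connection on the reduction $(P,M,G_{k,l})$ and the standard $G_{k,l}$-module splits block-diagonally as $({\rm Im}\Hq)^l\oplus\Hq^{k-l}$, the subbundle $W$ is $D$-parallel, and torsion-freeness then gives integrability of $W$; the paper instead extracts the same conclusion from the identity that $(\nabla_XJ)(Y)$ is a section of $W$ for sections $J$ of $Z$ and $X,Y$ of $W$.

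For the main claim, however, your proposal stops exactly where the content of the theorem begins. The reduction you describe --- kill the torsion terms, identify the obstruction with the image of a component of the curvature, constrained by the first Bianchi identity to lie in $\mathcal{K}(\mathfrak{g}_{k,l})$, under a projection onto a ``$(0,2)$'' target module --- is indeed the route the paper takes, except that the paper outsources it: the CR half is \cite[Theorem 4.6]{fq_1} applied directly, and the co-CR half is reduced via \cite[Theorem A.3]{fq_1} to the condition that $R^{\nabla}(\r(X),\r(Y))Z\in E^J$ for all $J\in Z$ and $X,Y,Z\in E^J$, which is then verified ``similarly to the proof of \cite[Theorem 4.6]{fq_1}''. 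You defer precisely this verification, declaring it ``where the main work lies'' and only \emph{expecting} the target submodule to be absent whenever $(k,l)\neq(2,2),(1,0)$. That expectation cannot be taken on faith: the paper's own three-dimensional example in Section \ref{section:co-cr_q} shows that for $(k,l)=(1,1)$ the co-CR obstruction is exactly the trace-free symmetric part of the Ricci tensor of the Weyl connection $D$ --- the Einstein--Weyl condition --- which certainly does not vanish for every compatible torsion-free $D$. So the vanishing of the obstruction is genuinely sensitive to $(k,l)$, the representation-theoretic computation \emph{is} the proof rather than a routine check, and as written your argument establishes only the reduction to a curvature condition, not the theorem itself.
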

\begin{proof}
Let $\iota:TM\to E$ be the inclusion and $\r:E\to TM$ the projection.
It quickly follows that
we may apply \cite[Theorem 4.6]{fq_1} to obtain that $(M,E,\iota,\nabla)$ is CR quaternionic.
To prove that $(M,E,\r,\nabla)$ is co-CR quaternionic we apply \cite[Theorem A.3]{fq_1} to $D$. Thus, we obtain that
it is sufficient to show that for any $J\in Z$ and any $X,Y,Z\in E^J$
we have $R^D(\r(X),\r(Y))(\r(Z))\in\r(E^J)$\,, where $E^J$ is the eigenspace of $J$, with respect to $-{\rm i}$\,,
and $R^D$ is the curvature form of $D$\,; equivalently, for any $J\in Z$ and any $X,Y,Z\in E^J$ we have
$R^{\nabla}(\r(X),\r(Y))Z\in E^J$, where $R^{\nabla}$ is the curvature form of $\nabla$.
The proof of the fact that $(M,E,V,\nabla)$ is $f$-quaternionic follows, similarly to the proof of \cite[Theorem 4.6]{fq_1}\,.
The last statement, follows quickly from the fact that $(\nabla_XJ)(Y)$ is a section of $W$, for any section $J$ of $Z$ and $X,\,Y$ of $W$.
\end{proof}

\indent
{}From the proof of Theorem \ref{thm:f-q} we immediately obtain the following.

\begin{cor}
Let $(M,E,V)$ be an almost $f$-quaternionic manifold and let $D$ be a compatible torsion free connection, $\rank E\geq8$\,.
Then $(M,E,\r,\nabla)$ is co-CR quaternionic, where $\r:E\to TM$ is the projection and $\nabla=D^V\oplus D$.
\end{cor}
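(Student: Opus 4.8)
The plan is to derive the corollary directly from the proof of Theorem \ref{thm:f-q}, isolating exactly the portion of that argument which establishes co-CR quaternionicity and checking that it survives the weaker hypotheses. Recall that in Theorem \ref{thm:f-q} one assumes $\rank E=4k$, $\rank V=l$ together with $(k,l)\neq(2,2),(1,0)$, and concludes both that $(M,E,\iota,\nabla)$ is CR quaternionic and that $(M,E,\r,\nabla)$ is co-CR quaternionic. Here I only want the co-CR conclusion, and I am trading the explicit exceptional-case list for the single hypothesis $\rank E\geq8$, i.e.\ $k\geq2$.

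First I would recall how the co-CR part of Theorem \ref{thm:f-q} was obtained: one applies \cite[Theorem A.3]{fq_1} to the compatible torsion free connection $D$, which reduces the integrability of the almost twistorial structure $\t_c$ of $(M,E,\r,\nabla)$ to the purely algebraic curvature condition that, for every $J\in Z$ and all $X,Y,Z\in E^J$, one has $R^{\nabla}(\r(X),\r(Y))Z\in E^J$, where $E^J$ is the $(-\mathrm{i})$-eigenspace of $J$ and $\nabla=D^V\oplus D$. The key point is that this reduction and the subsequent verification use only that $D$ is torsion free and compatible, together with the structure of the almost co-CR quaternionic data $(E,\r)$; they do \emph{not} invoke the CR side of the argument, nor the Bianchi/representation-theoretic input that forces the exclusion of the pairs $(2,2)$ and $(1,0)$.

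Next I would confirm that the exceptional pairs enter the Theorem \ref{thm:f-q} argument only through the CR quaternionic half (the application of \cite[Theorem 4.6]{fq_1}) and through the simultaneous $f$-quaternionic integrability, and that the co-CR verification above goes through for all $k\geq2$, $l\geq0$ with $k\geq l$. Since $\rank E\geq8$ means $k\geq2$, the restriction on $l$ imposed by the $f$-quaternionic setup is automatically accommodated, and in particular the borderline case $(k,l)=(1,0)$ (which is $\rank E=4$) is excluded by the hypothesis $\rank E\geq8$, while $(2,2)$ is no longer problematic because we are not asserting the full $f$-quaternionic conclusion.

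The main obstacle, and the step I would check most carefully, is precisely this decoupling: I must be sure that establishing the curvature condition $R^{\nabla}(\r(X),\r(Y))Z\in E^J$ via \cite[Theorem A.3]{fq_1} does not secretly rely on the CR quaternionicity already in hand, nor on the excluded dimensional cases. Once that independence is verified, the corollary is immediate — the co-CR quaternionic integrability of $(M,E,\r,\nabla)$ holds under the single hypothesis $\rank E\geq8$, exactly as stated. Thus the proof is simply a pointer: \emph{From the proof of Theorem \ref{thm:f-q}, the verification that $(M,E,\r,\nabla)$ is co-CR quaternionic uses only that $D$ is torsion free and compatible, together with $\rank E\geq8$; hence the assertion follows.}
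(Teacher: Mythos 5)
Your proposal is correct and matches the paper exactly: the paper offers no separate argument, stating only that the corollary is obtained immediately from the proof of Theorem~\ref{thm:f-q}, whose co-CR half consists precisely of the application of \cite[Theorem A.3]{fq_1} to $D$ and the resulting curvature condition $R^{\nabla}(\r(X),\r(Y))Z\in E^J$, independent of the CR side and of the excluded pairs. Your write-up simply makes explicit the decoupling that the paper leaves implicit.
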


\indent
Next, we prove two realizability results for $f$-quaternionic manifolds.

\begin{prop} \label{prop:realiz_f-q_1}
Let $(M,E,V,\nabla)$ be an $f$-quaternionic manifold, $\rank V=1$\,, where $\nabla=D^V\oplus D$ for some compatible connection
$D$ on $M$. Then $(M,E,\iota,\nabla)$ is realizable, where $\iota:TM\to E$ is the inclusion.
\end{prop}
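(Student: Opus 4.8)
The plan is to reduce the realizability of the CR~quaternionic structure $(M,E,\iota,\nabla)$ to the general realizability theorem cited at the end of Section~\ref{section:cr_q}, namely \cite[Corollary~5.4]{fq_1}, which asserts that any real-analytic CR~quaternionic manifold is realizable. Thus the first step is to produce, out of the given $f$-quaternionic data, a genuine CR~quaternionic manifold in the sense of Definition~\ref{defn:cr_q}: I need $(E,\iota)$ to be an almost CR~quaternionic structure and $\nabla$ to be an almost quaternionic connection whose associated almost twistorial structure is integrable. Since $(M,E,V,\nabla)$ is by hypothesis $f$-quaternionic, the defining condition guarantees that \emph{both} associated almost twistorial structures $\t$ (of $(M,E,\iota,\nabla)$) and $\t_c$ (of $(M,E,\r,\nabla)$) are integrable; in particular $\t$ is integrable, which is exactly the statement that $(E,\iota,\nabla)$ is a CR~quaternionic structure on $M$. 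So the algebraic and integrability content is essentially immediate from the definitions, and the substance of the proposition lies entirely in establishing \emph{real-analyticity}, so that \cite[Corollary~5.4]{fq_1} applies.

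The second, and main, step is therefore to arrange real-analyticity of all the structures involved. The natural approach is to work locally and invoke the classical fact that an integrable complex (or, here, CR) structure carries a canonical real-analytic structure in which it is real-analytic, and to transport this back to $M$ through the twistor fibration. Concretely, since $\rank V=1$ the co-CR twistor space $Z_c$ (or equivalently the twistor space $T$ produced by Theorem~\ref{thm:co-cr_q} when $\t_c$ is simple) is a complex manifold, and by Theorem~\ref{thm:co-cr_q} it carries a locally complete family of rational curves $\{Z_x\}$; the Kodaira deformation theory underlying that theorem already endows the parameter space---locally $M$ itself---with a real-analytic structure. I would use this real-analytic structure on $M$, together with the holomorphic structure on $T$, to upgrade $E$, $\iota$, and $\nabla$ to real-analytic objects: $E$ is recovered as the bundle whose fibre over $x$ is read off from the normal bundle data of $Z_x$, and $\iota$, $\nabla$ are determined by the twistorial construction, hence inherit real-analyticity.

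The point where $\rank V=1$ is genuinely used---and the step I expect to be the main obstacle---is in passing from the co-CR (twistor) side, where simplicity and the clean rational-curve picture of Theorem~\ref{thm:co-cr_q} are available, to the real-analytic structure that the CR side needs. When $\rank V=1$, the normal bundle $2(k-l)\ol(1)\oplus l\,\ol(2)$ of the twistor lines (see Corollary~\ref{cor:f-q}) has $l=1$, so the $\ol(2)$-summand is a single line bundle and the heaven space has dimension exactly $\dim M+1$; this one-dimensional transversal direction is what makes the family of lines and its parameter space behave well enough to furnish a real-analytic structure compatible simultaneously with both $\t$ and $\t_c$. I would therefore argue that, after passing to a convex (hence simple) open set as in the proof of Corollary~\ref{cor:f-q}, the complex manifold $T$ and its complete family of twistor lines determine a real-analytic structure on $M$ for which $(E,\iota,\nabla)$ is real-analytic CR~quaternionic; the realizability then follows directly from \cite[Corollary~5.4]{fq_1}. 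The delicate part is checking that the real-analytic structure coming from the co-CR side is the same as the one making the CR~quaternionic datum $(E,\iota,\nabla)$ real-analytic---that is, the compatibility of the two twistor pictures---and this is precisely where the hypothesis $\rank V=1$, rather than a general $l$, is needed to keep the construction unobstructed.
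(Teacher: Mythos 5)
Your proposal follows the paper's proof essentially verbatim: pass to a convex open set so that $\t_c$ becomes simple, invoke Theorem \ref{thm:co-cr_q} to obtain real-analyticity from the complex twistor space and its family of twistor lines, transfer this to the CR side, and conclude with \cite[Corollary 5.4]{fq_1}\,. The one step you flag as delicate but leave open --- transferring real-analyticity from the co-CR to the CR twistorial structure --- is the only place the paper is more concrete: it observes that $Q^{\C}$ is real analytic and combines this with the decomposition $TM=(V\otimes Q)\oplus W$ (this, rather than the normal-bundle heuristic you offer, is where $\rank V=1$ enters) to conclude that $(Z,M,\p,\Cal)$ is real analytic.
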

\begin{proof}
By passing to a convex open set of $D$, if necessary, we may suppose that the twistorial structure $(Z,M,\p,\mathcal{D})$
of the co-CR quaternionic manifold $(M,E,\r)$ is simple, where $\r:E\to TM$ is the projection.
Thus, by Theorem \ref{thm:co-cr_q}\,, we have that $(Z,M,\p,\mathcal{D})$ is real analytic. It follows that $Q^{\C}$ is real analytic which, together with the relation $TM=(V\otimes Q)\oplus W$, quickly gives
that the twistorial structure $(Z,M,\p,\Cal)$ of $(M,E,\iota)$ is real analytic. By \cite[Corollary 5.4]{fq_1} the proof is complete.
\end{proof}

\indent
The next result is an immediate consequence of Theorem~\ref{thm:f-q} and Proposition~\ref{prop:realiz_f-q_1}\,.

\begin{cor} \label{cor:realiz_f-q_1}
Let $(M,E,V)$ be an almost $f$-quaternionic manifold, with $\rank V=1$\,, $\rank E\geq8$\,, and let $\nabla$ be a torsion free
connection on $E$ compatible with its linear quaternionic structure.
Then $(M,E,\iota,\nabla)$ is realizable, where $\iota:TM\to E$ is the inclusion.
\end{cor}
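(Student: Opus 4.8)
The plan is to deduce this corollary by combining the two results explicitly cited in the remark preceding it, namely Theorem~\ref{thm:f-q} and Proposition~\ref{prop:realiz_f-q_1}. The statement of Corollary~\ref{cor:realiz_f-q_1} differs from the hypotheses of Proposition~\ref{prop:realiz_f-q_1} in only one essential way: the proposition already assumes that $(M,E,V,\nabla)$ is an $f$-quaternionic manifold (an integrability hypothesis), whereas the corollary merely assumes that $(M,E,V)$ is \emph{almost} $f$-quaternionic with a torsion free compatible connection $\nabla$. Thus the whole content of the corollary is to supply the missing integrability by invoking Theorem~\ref{thm:f-q}, and then quote the proposition verbatim.

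First I would verify that the hypotheses of Theorem~\ref{thm:f-q} are met. The theorem requires a torsion free compatible connection $D$ and the dimensional restriction $(k,l)\neq(2,2),(1,0)$, where $\rank E=4k$ and $\rank V=l$. Here I would write $\nabla=D^V\oplus D$ for the torsion free connection $D$ on $M$ underlying $\nabla$ (the hypothesis ``torsion free connection on $E$ compatible with its linear quaternionic structure'' should be read, as in the earlier results, as arising from a compatible torsion free $D$ on $M$). Since $\rank V=l=1$ and $\rank E=4k\geq8$, we have $l=1$ and $k\geq2$; in particular $(k,l)$ is neither $(2,2)$ nor $(1,0)$, so the dimensional hypothesis of Theorem~\ref{thm:f-q} is satisfied. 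Applying that theorem yields that $(M,E,V,\nabla)$ is $f$-quaternionic.

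With integrability in hand, $(M,E,V,\nabla)$ is now an $f$-quaternionic manifold with $\rank V=1$ and $\nabla=D^V\oplus D$ for a compatible connection $D$, which is exactly the hypothesis of Proposition~\ref{prop:realiz_f-q_1}. Applying that proposition gives that $(M,E,\iota,\nabla)$ is realizable, where $\iota:TM\to E$ is the inclusion, which is the desired conclusion.

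I do not expect any genuine obstacle here, since the corollary is advertised as an ``immediate consequence'' of the two preceding results; the only point requiring a moment's care is checking the numerology, namely that $\rank E\geq8$ together with $\rank V=1$ forces $k\geq2$ and hence steers clear of the two excluded pairs $(2,2)$ and $(1,0)$ in Theorem~\ref{thm:f-q}. Once that bookkeeping is confirmed, the proof is simply the concatenation of the two cited statements.
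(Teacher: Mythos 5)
Your proof is correct and follows exactly the route the paper intends: the paper states this corollary as ``an immediate consequence of Theorem~\ref{thm:f-q} and Proposition~\ref{prop:realiz_f-q_1}'', and your argument—checking that $\rank V=1$ and $\rank E\geq8$ exclude the pairs $(2,2)$ and $(1,0)$, invoking Theorem~\ref{thm:f-q} for integrability, then applying Proposition~\ref{prop:realiz_f-q_1}—is precisely that deduction. No issues.
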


\indent
We end this section with the following result.

\begin{prop} \label{prop:toward_qK_heaven}
Let $(M,E,V,\nabla)$ be a real analytic $f$-quaternionic manifold, with $\rank V=1$\,, where $\nabla=D^V\oplus D$ for some
torsion free compatible connection $D$ on $M$. Let $N$ be the heaven space of $(M,E,\iota,\nabla)$\,, where $\iota:TM\to E$
is the inclusion, and denote by $Z_N$ its twistor space. Then $Z_N$ is endowed with a nonintegrable holomorphic distribution
$\H$ of codimension one, transversal to the twistor lines corresponding to the points of $N\setminus M$.
\end{prop}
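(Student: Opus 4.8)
The plan is to realize $M$ as a hypersurface of its heaven space $N$, transport the problem to the twistor space $Z_N$, and there produce $\H$ as a holomorphic connection for the submersion onto the twistor space of the underlying co-CR quaternionic structure. First I would apply Proposition~\ref{prop:realiz_f-q_1} to the CR quaternionic manifold $(M,E,\iota,\nabla)$: since $\rank V=1$ and $\nabla=D^V\oplus D$, it is realizable, so $M$ is an embedded real-analytic hypersurface of $N$, with $E=TN|_M$ and $V$ the normal bundle of $M$ in $N$. Writing $\pi_N\colon Z_N\to N$ for the twistor projection, $\dim_{\C}Z_N=2k+1$, the set $Z=\pi_N^{-1}(M)$ is a real-analytic hypersurface of $Z_N$, swept by the twistor lines $\ell_x=\pi_N^{-1}(x)$, $(x\in M)$. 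Applying Corollary~\ref{cor:f-q} to the compatible co-CR quaternionic structure $(E,\r)$ yields a holomorphic submersion $\Psi\colon Z_N\to Z_c$ onto its twistor space, with $\dim_{\C}Z_c=2k$, whose fibres $\ker\dif\Psi$ are one-dimensional and which maps each twistor line $\ell_y$ biholomorphically onto a twistor line of $Z_c$; along $Z$ it restricts to the co-CR submersion $\phi\colon Z\to Z_c$.

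Next I would build $\H$ as a holomorphic distribution complementary to $\ker\dif\Psi$ — that is, a holomorphic Ehresmann connection for $\Psi$ — singled out by the requirement that the twistor lines lying over $M$ be horizontal. On the real-analytic hypersurface $Z$ the compatible CR and co-CR structures assemble into the $f$-structure; using it together with $\phi$ one defines along $Z$ a complex hyperplane field that contains the twistor-line directions $\V|_Z$ (where $\V=\ker\dif\pi_N$) and is complementary to $\ker\dif\Psi$. By real-analyticity this field extends, uniquely and holomorphically, to a codimension-one holomorphic distribution $\H$ on a neighbourhood of $Z$ in $Z_N$ with $\H\cap\ker\dif\Psi=0$. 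Here the torsion-freeness of $D$ is essential: it is exactly what makes $M$ a $q$-umbilical hypersurface of $N$, guaranteeing that the field produced on $Z$ is the trace of a genuine holomorphic distribution rather than a merely formal one.

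For the transversality statement I would measure the failure of a twistor line to be horizontal by the holomorphic section $s$ of $\V^*\otimes(TZ_N/\H)$ obtained by projecting $\V$ along $\H$; a line $\ell_y$ is transversal to $\H$ exactly where $s\neq0$. Restricting to $\ell_y\cong\C\!P^1$, one has $\V|_{\ell_y}=\ol(2)$ and, the degree being constant in the family, $(TZ_N/\H)|_{\ell_y}=\ol(2)$, so $s|_{\ell_y}$ is a constant; hence $\ell_y$ is either horizontal or everywhere transversal. The horizontal locus is the zero set of $s$, a complex-analytic subset of the parameter space (more precisely, of the complexification of $N$) whose real trace is, by the normalisation imposed in the construction, exactly $M$. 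Since $M$ is a real hypersurface of the (totally real) $N$, this gives $s|_{\ell_y}\neq0$ precisely for $y\in N\setminus M$, which is the asserted transversality. This matches, for $k=l=1$, the classical picture in which $\ell_x$ is horizontal along the Einstein--Weyl space $M^3$ and transversal over its self-dual Einstein heaven space.

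Finally, the non-integrability of $\H$ amounts to the non-vanishing of the curvature of this connection, i.e.\ of the two-form on $\H$, with values in $TZ_N/\H\cong\ker\dif\Psi$, given by $(X,Y)\mapsto[X,Y]$ modulo $\H$. I expect this to be the main obstacle: one must show this curvature does not vanish identically. The strategy is to compute it from $\nabla$ and the second fundamental form of $M\subseteq N$ and to extract its non-vanishing from torsion-freeness; equivalently, were $\H$ integrable it would be a flat connection for $\Psi$, so that near $Z$ the fibration $\Psi$ would be holomorphically trivial with the horizontal twistor lines filling a complex hypersurface, and this cannot be reconciled with the local completeness and positivity ($N_{\ell_y}=2k\,\ol(1)$) of the twistor family for a proper hypersurface $M\subsetneq N$. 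In the three-dimensional case this non-vanishing is precisely the statement that the heaven-space metric has nonzero scalar curvature. Carrying out this curvature computation is where the substantive work lies; the further non-degeneracy needed to upgrade $\H$ to a holomorphic contact structure (and hence, by \cite{LeB-twist_qK}, to a quaternionic-K\"ahler structure on $N\setminus M$) is deferred, as in the statement, to a separate hypothesis.
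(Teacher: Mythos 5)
Your overall scaffolding (realizability of $(M,E,\iota,\nabla)$ via Proposition \ref{prop:realiz_f-q_1}, the submersion $Z_N\to Z_c$ from Corollary \ref{cor:f-q}, and the requirement that $\H$ contain the twistor lines over $M$) is consistent with what the proposition needs, and your degree count showing that each twistor line is either tangent to $\H$ or everywhere transversal to it is a clean observation. But the two steps that actually constitute the proof are missing. First, $\H$ is never constructed: you do not write down the ``complex hyperplane field along $Z=\pi_N^{-1}(M)$'', and the passage from a field on the \emph{real} hypersurface $Z\subseteq Z_N$ to a holomorphic distribution on a neighbourhood is not a consequence of real-analyticity alone --- a real-analytic section of a holomorphic bundle over a generic real hypersurface extends holomorphically only if it satisfies the tangential Cauchy--Riemann equations, and verifying that condition for your field is precisely the substance of the proof, not a remark to be absorbed into ``q-umbilicity''. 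The paper avoids this issue entirely by complexifying $M$ at the outset: on the bundle $Z$ of isotropic directions of $Q$ over the complexified $M$ it takes the integrable distribution $\F$ (horizontal lifts of $\iota^{-1}(E^p)$) whose leaf space is $Z_N$, forms the explicit codimension-one distribution $\mathscr{K}=\tG\oplus\ker\dif\p$ with $\tG_p$ the horizontal lift of $(V_x\otimes p^{\perp})\oplus W_x$, and proves by Cartan's structure equations and \cite[Proposition III.2.3]{KoNo} that $\mathscr{K}$ is projectable along $\F$; the distribution $\H$ is its projection, holomorphic by construction.

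Second, the nonintegrability --- which you yourself flag as ``where the substantive work lies'' --- is deferred rather than proved, and the fallback argument you sketch (integrable $\H$ would give a flat connection for $\Psi$, allegedly contradicting local completeness and the splitting $2k\,\ol(1)$ of the normal bundles) is not a contradiction as stated: nothing you cite rules out a flat Ehresmann connection whose horizontal twistor lines sweep out the real-codimension-one set $\pi_N^{-1}(M)$. The paper settles this with a concrete Lie-bracket computation in the adapted frame bundle (carried out in detail in the proof of Theorem \ref{thm:qK_heaven}): using $[A,B(\xi)]=B([A,\xi])$ for a fundamental vertical field $A$ and a standard horizontal field $B(\xi)$, one picks $\xi\in\mathfrak{sl}(2,\mathbb{C})$ preserving $\ker\xi_0$ and $A$ with $[A,\xi](\ker\xi_0)\nsubseteq\ker\xi_0$, producing two sections of the distribution whose bracket leaves it. Until you supply (a) an explicit holomorphic definition of $\H$ and (b) such a curvature computation, the proposal is a plausible plan rather than a proof.
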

\begin{proof}
By passing to a complexification, we may assume all the objects complex analytic. Furthermore, excepting $Z$,
we shall denote by the same symbols the corresponding complexifications. As for $Z$, this will denote the bundle of
isotropic directions of $Q$. Then any $p\in Z$ corresponds to a vector subspace $E^p$ of $E$. Let $\F$ be the distribution on $Z$ such that $\F_p$ is the horizontal lift, with respect to $\nabla$, of $\iota^{-1}(E^p)$\,,
$(p\in Z)$\,. As $(M,E,V,\nabla)$ is (complex) $f$-quaternionic $\F$ is integrable. Moreover, locally, we may suppose that its leaf space is $Z_N$\,. Let $\tG$ be the distribution on $Z$ such that, at each $p\in Z$, we have that $\tG_p$ is the horizontal lift
of $(V_x\otimes p^{\perp})\oplus W_x$\,, where $x=\p(p)$\,. Define $\mathscr{K}=\tG\oplus{\rm ker}\dif\!\p$\,. Then
the complex analytic versions of Cartan's structural equations
and \cite[Proposition III.2.3]{KoNo}\,, straightforwardly show that $\mathscr{K}$ is projectable with respect to $\F$.
Thus, $\mathscr{K}$ projects to a distribution $\H$ on $Z_N$ of codimension one.
Furthermore, by using again \cite[Proposition III.2.3]{KoNo}\,, we obtain that $\H$ is nonintegrable.
\end{proof}

\section{Quaternionic-K\"ahler manifolds as heaven spaces} \label{section:qK_heaven_spaces}

\indent
A \emph{quaternionic-K\"ahler} manifold is a quaternionic manifold endowed with a
(semi-Riemannian) Hermitian metric whose Levi-Civita connection is quaternionic
and whose scalar curvature is assumed nonzero.\\
\indent
Let $(M,E,\iota,\nabla)$ be a CR quaternionic manifold with $\rank E=\dim M+1$\,.
Let $W$ be the largest quaternionic vector subbundle of $E$ contained by $TM$ and denote by
$\mathcal{I}$ the (Frobenius) integrability tensor of $W$. {}From the integrability of the
almost twistorial structure of $(M,E,\iota,\nabla)$ it follows that, for any $J\in Z$\,,
the two-form $\mathcal{I}|_{E^J}$ takes values in $E^J/(E^J\cap W^{\C})$\,; as this is one-dimensional
the condition $\mathcal{I}|_{E^J}$ nondegenerate has an obvious meaning.

\begin{defn}
A CR quaternionic manifold $(M,E,\iota,\nabla)$\,, with $\rank E=\dim M+1$\,, is \emph{nondegenerate}
if $\mathcal{I}|_{E^J}$ is nondegenerate, for any $J\in Z$.
\end{defn}

\indent
Let $M$ be a submanifold of a quaternionic manifold $N$ and $Z$ the twistor space of $N$.\\
\indent
Denote by $B$ the second fundamental form of $M$ with respect to some quaternionic connection $\nabla$ on $N$;
that is, $B$ is the (symmetric) bilinear form on $M$, with values in $(TN|_M)/TM$, characterised by $B(X,Y)=\s(\nabla_XY)$\,,
for any vector fields $X$, $Y$ on $M$, where $\s:TN|_M\to(TN|_M)/TM$ is the projection.

\begin{defn} \label{defn:q_umbilic}
We say that $M$ is \emph{q-umbilical} in $N$ if for any $J\in Z|_M$ the second fundamental form of $M$
vanishes along the eigenvectors of $J$ which are tangent to $M$.
\end{defn}

\indent
{}From \cite[Propositions 1.8(ii) and 2.8]{IMOP} it quickly follows that the notion of q-umbilical submanifold,
of a quaternionic manifold, does not depend of the quaternionic connection used to define the second fundamental form.\\
\indent
Note that, if $\dim N=4$ then we retrieve the usual notion of umbilical submanifold. Also, if a quaternionic manifold
is endowed with a Hermitian metric then any umbilical submanifold of it is q-umbilical.\\
\indent
The notion of q-umbilical submanifold of a quaternionic manifold can be easily extended to CR quaternionic manifolds.
Indeed, just define the second fundamental form $B$ of $(M,E,\iota,\nabla)$ by $B(X,Y)=\tfrac12\,\s(\nabla_XY+\nabla_YX)$\,,
for any vector fields $X$ and $Y$ on $M$, where $\s:E\to E/TM$ is the projection.

\begin{thm} \label{thm:qK_heaven}
Let $N$ be the heaven space of a real analytic CR~quaternionic manifold $(M,E,\iota,\nabla)$\,, with $\rank E=\dim M+1$\,.
If $M$ is q-umbilical in $N$ then the twistor space $Z_N$ of $N$ is endowed with a nonintegrable holomorphic distribution $\H$
of codimension one, transversal to the twistor lines corresponding to the points of $N\setminus M$. Furthermore,
the following assertions are equivalent:\\
\indent
\quad{\rm (i)} $\H$ is a holomorphic contact structure on $Z_N$\,.\\
\indent
\quad{\rm (ii)} $(M,E,\iota,\nabla)$ is nondegenerate.
\end{thm}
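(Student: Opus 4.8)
The plan is to build the distribution $\H$ on $Z_N$ by the same mechanism as in Proposition \ref{prop:toward_qK_heaven}, and then analyze the extra geometric content of the contact condition. First I would complexify everything, so that $N$, its twistor space $Z_N$, and the relevant bundles become complex analytic; as in the proof of Proposition \ref{prop:toward_qK_heaven}, I let $Z$ denote the bundle of isotropic directions of $Q$ over $M$ and recall that each $p\in Z$ determines a subspace $E^p\subseteq E$. The key point is that $\rank E=\dim M+1$ means $E/TM$ is one-dimensional, so the largest quaternionic subbundle $W$ of $E$ contained in $TM$ has corank one in $TM$ along the relevant eigenspaces, and the quotient $E^J/(E^J\cap W^{\C})$ is one-dimensional. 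I would construct a horizontal distribution $\mathscr{K}=\tG\oplus{\rm ker}\dif\!\p$ on $Z$, where $\tG_p$ is the horizontal lift of the appropriate codimension-one subspace built from $W$ and $p^{\perp}$, show it is projectable along the leaf foliation $\F$ whose leaf space is $Z_N$, and thereby obtain the holomorphic codimension-one distribution $\H$ on $Z_N$. The q-umbilical hypothesis is exactly what makes $\mathscr{K}$ projectable: it forces the second fundamental form to vanish on the eigenvectors tangent to $M$, so that the relevant brackets stay inside $\mathscr{K}$ modulo $\F$. Nonintegrability of $\H$, and its transversality to the twistor lines over $N\setminus M$, would follow as in Proposition \ref{prop:toward_qK_heaven} from \cite[Proposition III.2.3]{KoNo} together with Cartan's structural equations.

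For the equivalence of (i) and (ii), I would recall that a codimension-one holomorphic distribution $\H$ on the complex manifold $Z_N$ (of the appropriate odd dimension) is a holomorphic contact structure precisely when the Frobenius form $\theta\wedge(\dif\theta)^{\wedge(k-1)}$ is nowhere vanishing, where $\theta$ is a local defining one-form for $\H$; equivalently, the bilinear form $\dif\theta|_{\H}$ induced on $\H$ is everywhere nondegenerate. The plan is to identify this bilinear form, restricted to each twistor line and its normal directions, with the integrability tensor $\mathcal{I}|_{E^J}$ of $W$ that appears in the definition of nondegeneracy. Concretely, the Frobenius obstruction to integrating $\H$ is computed by the bracket of horizontal lifts, which descends from the bracket of sections of $\mathscr{K}$ on $Z$; this bracket, evaluated modulo $\mathscr{K}$ and then modulo $\F$, is governed by the integrability tensor $\mathcal{I}$ of $W$ read off along the eigenspace $E^J$. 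Thus the nondegeneracy of $\dif\theta|_{\H}$ at a point of the twistor line corresponding to $J\in Z$ matches, fibrewise, the nondegeneracy of the two-form $\mathcal{I}|_{E^J}$ taking values in the one-dimensional space $E^J/(E^J\cap W^{\C})$.

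The main obstacle I anticipate is precisely the bookkeeping in this last identification: one must relate the curvature/bracket computation on the total space $Z$, which defines $\dif\theta|_{\H}$ after projection to $Z_N$, to the purely linear-algebraic integrability tensor $\mathcal{I}|_{E^J}$ of the Frobenius obstruction of $W\subseteq TM$. This requires carefully tracking how the horizontal lifts with respect to $\nabla$ interact with the leaf foliation $\F$ and the fibre directions ${\rm ker}\dif\!\p$, and checking that the q-umbilical condition removes exactly the terms involving the second fundamental form so that what survives is intrinsic to $W$. I would handle this by working in a local frame adapted to the splitting $TM=(V\otimes Q)\oplus W$, expressing $\theta$ as the horizontal lift of a defining form for $W^{\C}\cap E^p$ inside $E^p$, and computing $\dif\theta$ via Cartan's structural equations, discarding the curvature terms that the q-umbilicity annihilates. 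Once the correspondence $\dif\theta|_{\H}\leftrightarrow\mathcal{I}|_{E^J}$ is established pointwise along every twistor line, the equivalence (i)$\Leftrightarrow$(ii) is immediate, and the reference to \cite{LeB-twist_qK} then yields the quaternionic-K\"ahler structure on $N\setminus M$.
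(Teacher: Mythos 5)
Your overall strategy (complexify, build a codimension-one distribution on a bundle over $M$, project it to $Z_N$ using q-umbilicity, then identify the Levi form of $\H$ along the twistor lines with $\mathcal{I}|_{E^J}$) is the same as the paper's, and your reduction of (i)$\Leftrightarrow$(ii) to the fibrewise identification of $\dif\theta|_{\H}$ with $\mathcal{I}|_{E^J}$ matches the paper's final step, which rests on the identity $\F+(\dif\!\p)^{-1}(W)=\tG$. But there is a genuine gap at the very first step: you propose to define $\tG_p$ as ``the horizontal lift of the appropriate codimension-one subspace built from $W$ and $p^{\perp}$'', and later to compute ``in a local frame adapted to the splitting $TM=(V\otimes Q)\oplus W$''. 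That splitting is a feature of the almost $f$-quaternionic structure used in Proposition \ref{prop:toward_qK_heaven}; for a general q-umbilical CR quaternionic hypersurface there is no distinguished complement $V$ and no decomposition $TM=(V\otimes Q)\oplus W$, and $W$ alone has the wrong corank in $TM$ to single out a hyperplane. The paper's solution to exactly this problem is the new ingredient of the proof: it works on $PH+PF^*$ (with $TN=H\otimes F$, $\nabla=\nabla^H\otimes\nabla^F$ torsion free), embeds $Z$ there via $[u]\mapsto\bigl([u],[\a]\bigr)$ with $[u]\otimes\ker\a=TM\cap([u]\otimes F)$, and takes $\tG'$ to be the preimage of the hyperplane $\ker\bigl(\ep(u)\otimes\a\bigr)$ of $T_{\p(u)}N$; the distribution $\tG=TZ\cap\tG'$ is then well defined without any choice of complement. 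Your plan, as written, does not produce the distribution whose existence is the first assertion of the theorem.

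Two further points. First, in the paper q-umbilicity is used precisely to show that the CR foliation $\F$ on $Z$ coincides with the restriction to $Z$ of the tautological horizontal distribution $\F_P$ on the Grassmann bundle $P$; only after that does projectability of $\tG$ reduce to a bracket computation on the frame bundle ${\rm SL}(H)+{\rm GL}(F)$ using torsion-freeness and the analogue of \cite[Proposition III.2.3]{KoNo}. Your phrasing (``q-umbilicity makes $\mathscr{K}$ projectable because the relevant brackets stay inside $\mathscr{K}$'') conflates these two steps; the projectability computation itself uses only the torsion-freeness of $\nabla$ (which you should also justify assuming). Second, nonintegrability does not simply ``follow as in Proposition \ref{prop:toward_qK_heaven}'': the paper needs a separate argument on the bundle of adapted frames, exhibiting $\xi\in\frak{sl}(2,\C)$ with $\xi(\ker\xi_0)\subseteq\ker\xi_0$ and $A\in\frak{sl}(2,\C)$ with $[A,\xi](\ker\xi_0)\nsubseteq\ker\xi_0$, whose associated sections of $\tG$ have bracket outside $\tG$. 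You would need to supply an analogous explicit computation.
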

\begin{proof}
By passing to a complexification, we may assume all the objects complex analytic. Also, we may assume $\nabla$ torsion free.
Furthermore, excepting $Z$, which will be soon described, below, we shall denote by the same symbols the corresponding complexifications.\\
\indent
Let $\dim N=4k$\,. As the complexification of ${\rm Sp}(1)\cdot{\rm GL}(k,H)$ is ${\rm SL}(2,\C)\cdot{\rm GL}(2k,\C)$,
we may assume that, locally, $TN=H\otimes F$ where $H$ and $F$ are (complex analytic) vector bundles of rank $2$ and $2k$\,, respectively.
Also, $H$ is endowed with a nowhere zero section $\ep$ of $\Lambda^2H^*$ and $\nabla=\nabla^H\otimes\nabla^F$, for some connections
$\nabla^H$ and $\nabla^F$ on $H$ and $F$, respectively, with $\nabla^H\!\ep=0$.\\
\indent
Then, by restricting to a convex neighbourhood of $\nabla$, if necessary, $Z_N$ is the leaf space of the foliation $\F_N$ on $PH$
which, at each $[u]\in PH$, is given by the horizontal lift, with respect to $\nabla^H$ of $[u]\otimes F_{\p_H(u)}$\,,
where $\p_H:H\to N$ is the projection. Let $Z=PH|_M$ and let $\F$ be the foliation induced by $\F_N$ on $Z$. Note that,
the leaf space of $\F$ is $Z_N$\,.\\
\indent
Let $PH+PF^*$ be the restriction to $N$ of $PH\times PF^*$. Then $([u],[\a])\mapsto [u]\otimes{\rm ker}\,\a$ defines an embedding
of $PH+PF^*$ into the Grassmann bundle $P$ of $(2k-1)$-dimensional vector spaces tangent to $N$. As $\nabla=\nabla^H\otimes\nabla^F$,
this embedding preserves the connections induced by $\nabla^H$, $\nabla^F$ and $\nabla$ on $PH+PF^*$ and $P$.
Let $\F_P$ be the distribution on $P$ which, at each $p\in P$, is the horizontal lift, with respect to $\nabla$, of $p\subseteq T_{\p_P(p)}N$\,,
where $\p_P:P\to N$ is the projection. Then the restriction of $\F_P$ to $PH+PF^*$ is a distribution $\F'$ on $PH+PF^*$.\\
\indent
The map $Z\to P$\,, $[u]\mapsto TM\cap\bigl([u]\otimes F_{\p_H(u)}\bigr)$\,, is an embedding whose image is contained by $PH+PF^*$.
Moreover, the fact that $M$ is q-umbilical in $N$ is equivalent to the fact that $\F$ is the restriction of $\F_P$ to $Z$.\\
\indent
If for any $([u],[\a])\in PH+PF^*$ we take the preimage of ${\rm ker}(\ep(u)\otimes\a)$ through the projection of $PH+PF^*$
we obtain a distribution of codimension one $\tG'$ on $PH+PF^*$ which contains $\F'$. Furthermore, $\tG=TZ\cap\tG'$
is a codimension one distribution on $Z$ which contains $\F$.\\
\indent
To prove that $\tG$ is projectable with respect to $\F$, firstly, observe that this is equivalent to the fact
that the integrability tensor of $\tG$ is zero when evaluated on the pairs in which one of the vectors is from $\F$.
Thus, as $\F$ is integrable, $\F=\F'|_Z$ and $\tG=TZ\cap \tG'$, it is sufficient to prove
that, at each $p\in PH+PF^*$, the integrability tensor of $\tG'$ is zero when evaluated on the pairs formed of a vector
from a basis of $\F'_p$ and a vector from a basis of a space complementary to $\F'_p$\,.\\
\indent
Let ${\rm SL}(H)$ and ${\rm GL}(F)$ be the frame bundles of $H$ and $F$, respectively, and let ${\rm SL}(H)+{\rm GL}(F)$ be the
restriction to $N$ of ${\rm SL}(H)\times {\rm GL}(F)$\,. Then the kernel of the differential of the projection
of ${\rm SL}(H)+{\rm GL}(F)$ is the trivial vector bundle over
${\rm SL}(H)+{\rm GL}(F)$ with fibre $\frak{sl}(2,\C)\oplus\frak{gl}(2k,\C)$\,.
Also, note that, for any $(u,v)\in{\rm SL}(H)+{\rm GL}(F)$\,, we have that $u\otimes v$ is a (complex-quaternionic) frame on $N$.\\
\indent
Let $G$ be the closed subgroup of ${\rm SL}(2,\C)\times{\rm GL}(2k,\C)$ which preserves some fixed pair
$\bigl([x_0],[\a_0]\bigr)\in\C\!P^1\times P\bigl(\bigl(\C^{\!2k}\bigr)^*\bigr)$\,.
Then $PH+PF^*=\bigl({\rm SL}(H)+{\rm GL}(F)\bigr)/G$ and we denote $\F''=(\dif\!\mu)^{-1}(\F')$ and $\tG''=(\dif\!\mu)^{-1}(\tG')$\,,
where $\mu$ is the projection from ${\rm SL}(H)+{\rm GL}(F)$ onto $PH+PF^*$.\\
\indent
For any $\xi\in\C^{\!2}\otimes\C^{\!2k}$ we define a horizontal vector field $B(\xi)$ which at any $(u,v)\in{\rm SL}(H)+{\rm GL}(F)$
is the horizontal lift of $(u\otimes v)(\xi)$\,. Then $\F''$ is generated by the Lie algebra of $G$ and all $B(x_0\otimes y)$ with $\a_0(y)=0$\,.
Also, $\tG''$ is generated by $\frak{sl}(2,\C)\oplus\frak{gl}(2k,\C)$ and all $B(\xi)$ with $\bigl(\ep_0(x_0)\otimes\a_0\bigr)(\xi)=0$\,,
where $\ep_0$ is the volume form on $\C^{\!2}$.\\
\indent
Further, similarly to \cite[Proposition III.2.3]{KoNo}\,, we have
$\bigl[A_1\oplus A_2,B(x_1\otimes x_2)\bigr]=B(A_1x_1\otimes x_2+x_1\otimes A_2x_2)$\,, for any $A_1\in\frak{sl}(2,\C)$\,,
$A_2\in\frak{gl}(2k,\C)$\,, $x_1\in\C^{\!2}$ and $x_2\in\C^{\!2k}$. Also, because $\nabla$ is torsion free we have that,
for any $\xi,\e\in\C^{\!2}\otimes\C^{\!2k}$, the horizontal component of $\bigl[B(\xi),B(\e)\bigr]$ is zero.
These facts quickly show that, at each $(u,v)\in{\rm SL}(H)+{\rm GL}(F)$\,,
the integrability tensor of $\tG''$ is zero when evaluated on the pairs formed of a vector
from a basis of $\F''_{(u,v)}$ and a vector from a basis of a space complementary to $\F''_{(u,v)}$\,. Consequently,
$\tG$ is projectable with respect to $\F$.\\
\indent
Next, we shall prove that $\tG$ is nonintegrable. For this, firstly, observe that those $(u,v)$ in
$\bigl({\rm SL}(H)+{\rm GL}(F)\bigr)|_M$
for which $u\otimes v$ preserves the corresponding tangent space to $M$ form a principal bundle, which we shall call
`the bundle of adapted frames', whose structural group $K$ can be described, as follows. We may write
$\C^{\!2}\otimes\C^{\!2k}=\frak{gl}(2,\C)\oplus\bigl(\C^{\!2}\otimes\C^{\!2k-2}\bigr)$ so that $K$ is the closed subgroup of
${\rm SL}(2,\C)\times{\rm GL}(2k,\C)$ which preserve ${\rm Id}_{\C^{\!2}}$. Thus, $K$ contains
${\rm SL}(2,\C)$ acting on $\frak{gl}(2,\C)\oplus\bigl(\C^{\!2}\otimes\C^{\!2k-2}\bigr)$ by $(a,(\xi,\e))\mapsto(a\xi a^{-1},\e)$\,,
for any $a\in{\rm SL}(2,\C)$\,, $\xi\in\frak{gl}(2,\C)$ and $\e\in\C^{\!2}\otimes\C^{\!2k-2}$.\\
\indent
Note that, $TM$ is the bundle associated to the bundle of adapted frames through the action of $K$ on
$\frak{sl}(2,\C)\oplus\bigl(\C^{\!2}\otimes\C^{\!2k-2}\bigr)$\,.
Also, $Z\,(\subseteq P)$ is the quotient of the bundle of adapted frames through the closed subgroup of $K$
preserving $\C\!\xi_0\oplus\bigl({\rm ker}\,\xi_0\otimes\C^{\!2k-2}\bigr)$\,, for some fixed $\xi_0\in\frak{sl}(2,\C)\setminus\{0\}$
with $\det\xi_0=0$\,.\\
\indent
If we, locally, consider a principal connection on the bundle of adapted frames then we can define, similarly to above,
the corresponding `standard horizontal vector fields' $B(\xi)$\,, for any $\xi\in\frak{sl}(2,\C)\oplus\bigl(\C^{\!2}\otimes\C^{\!2k-2}\bigr)$\,,
so that $\tG$ corresponds to the distribution generated by the Lie algebra of $K$ and all $B(\xi)$ with
$\xi\in\C^{\!2}\otimes\C^{\!2k-2}$ or $\xi\in\frak{sl}(2,\C)$ such that $\xi({\rm ker}\,\xi_0)\subseteq{\rm ker}\,\xi_0$\,.
Thus, if we take $\xi\in\frak{sl}(2,\C)$ with $\xi({\rm ker}\,\xi_0)\subseteq{\rm ker}\,\xi_0$ and $A\in\frak{sl}(2,\C)$
such that $[A,\xi]({\rm ker}\,\xi_0)\nsubseteq{\rm ker}\,\xi_0$ then $A$ and $B(\xi)$ determine
sections of $\tG$ whose bracket is not a section of $\tG$.\\
\indent
Finally, the equivalence of the assertions (i) and (ii) is a straightforward consequence of the fact that if we denote by $W$
the largest complex-quaternionic subbundle of $TN|_M$ contained by $TM$ then $\F+(\dif\!\p)^{-1}(W)=\tG$, where $\p:Z\to M$
is the projection.
\end{proof}

\indent
The next result follows immediately from \cite{LeB-twist_qK} and Theorem \ref{thm:qK_heaven}.

\begin{cor} \label{cor:qK_heaven}
The following assertions are equivalent, for a real analytic hypersurface $M$ embedded in a quaternionic manifold $N$:\\
\indent
{\rm (i)} $M$ is nondegenerate and q-umbilical.\\
\indent
{\rm (ii)} By passing, if necessary, to an open neighbourhood of $M$, there exists a metric $g$ on $N\setminus M$
such that $(N\setminus M,g)$ is quaternionic-K\"ahler and the twistor lines determined by the points of $M$
are tangent to the contact distribution, on the twistor space of $N$, corresponding to $g$\,.
\end{cor}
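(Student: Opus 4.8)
The plan is to read off the corollary from Theorem~\ref{thm:qK_heaven} by feeding its output into---and, in the converse direction, extracting its hypotheses from---the correspondence of \cite{LeB-twist_qK} between holomorphic contact structures on twistor spaces and quaternionic-K\"ahler metrics. First I would set up the CR quaternionic data. Since $M$ is a real analytic hypersurface of the quaternionic manifold $N$, the pair $(TN|_M,\iota)$, with $\iota:TM\to TN|_M$ the inclusion and $\nabla$ a quaternionic connection on $N$, is a realizable CR quaternionic structure on $M$ whose heaven space is (a neighbourhood of $M$ in) $N$; moreover $\rank(TN|_M)=\dim N=\dim M+1$, so that the hypotheses, and the notion of nondegeneracy, of Theorem~\ref{thm:qK_heaven} apply. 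I write $Z_N$ for the twistor space of $N$ and $Z_x\subseteq Z_N$ for the twistor line of $x\in N$.

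For (i)$\Rightarrow$(ii) I would argue directly. Assuming $M$ q-umbilical, Theorem~\ref{thm:qK_heaven} produces a nonintegrable holomorphic distribution $\H$ of codimension one on $Z_N$, transversal to the twistor lines $Z_x$ with $x\in N\setminus M$; assuming, in addition, that $M$ is nondegenerate, the equivalence in that theorem shows that $\H$ is a holomorphic contact structure. By \cite{LeB-twist_qK}, such a contact structure determines a quaternionic-K\"ahler metric at each point of $N$ whose twistor line is transversal to $\H$; as this locus is exactly $N\setminus M$, we obtain a quaternionic-K\"ahler metric $g$ on $N\setminus M$. The remaining twistor lines---those of the points of $M$---are the degenerate, Legendrian lines along which $g$ becomes singular, and are therefore tangent to $\H$, which is the tangency assertion of (ii).

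For the converse (ii)$\Rightarrow$(i) I would run the correspondence backwards. Given the quaternionic-K\"ahler metric $g$ on $N\setminus M$, \cite{LeB-twist_qK} endows the twistor space of $N\setminus M$ with a holomorphic contact structure transversal to its twistor lines; by real analyticity this prolongs to a holomorphic distribution $\H$ of codimension one on all of $Z_N$. The hypothesis that the twistor lines of $M$ are tangent to the contact distribution, together with the transversality of those of $N\setminus M$, should then identify $\H$ with the distribution of Theorem~\ref{thm:qK_heaven}: pulling $\H$ back along the leaf projection $\phi:Z\to Z_N$ of the twistor foliation $\F$ on $Z=PH|_M$, the tangency/transversality dichotomy forces $\F$ to coincide with the restriction to $Z$ of the canonical horizontal distribution $\F_P$ of that proof, which is exactly the statement that $M$ is q-umbilical. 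Since $\H$ arises from a quaternionic-K\"ahler metric it is a contact structure, so the equivalence of Theorem~\ref{thm:qK_heaven} then yields that $M$ is nondegenerate.

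The main obstacle is the converse direction, specifically the step of prolonging the contact structure across the hypersurface $M$ and recovering the pointwise geometric conditions. One must verify that the holomorphic extension $\H$ to $Z_N$ still obeys the transversal-versus-tangent dichotomy for the two families of twistor lines, and then translate that dichotomy, through the explicit descriptions of $\F$, $\tG$ and the second fundamental form in the proof of Theorem~\ref{thm:qK_heaven}, into the q-umbilicity and nondegeneracy of $M$. Care is needed because the contact structure furnished by \cite{LeB-twist_qK} lives a priori only over $N\setminus M$, so that the behaviour of its real-analytic continuation along the degenerate twistor lines $Z_x$, $x\in M$, is exactly where the argument must be pinned down.
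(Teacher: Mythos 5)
Your proposal follows exactly the route the paper takes: the paper's entire proof is the single remark that the corollary ``follows immediately from \cite{LeB-twist_qK} and Theorem \ref{thm:qK_heaven}'', and your argument is a faithful expansion of precisely that combination (including the observation, implicit in the proof of Theorem \ref{thm:qK_heaven} via $\ker\dif\!\p\subseteq\tG$, that the twistor lines over $M$ are tangent to $\H$). The delicate points you flag in the converse direction are real but are exactly the ones the paper itself leaves to the reader, so no further comparison is needed.
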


\indent
If $\dim M=3$ then Corollary \ref{cor:qK_heaven} and \cite[Corollary 5.5]{fq_1} give the main result of \cite{LeB-Hspace}\,.
Also, the `quaternionic contact' manifolds of \cite{Bi-qK_heaven} (see \cite{Du-qK_dim7}\,)
are nondegenerate q-umbilical CR quaternionic manifolds.

\appendix

\section{The intrinsic description of linear\\
(co-)CR quaternionic structures} \label{appendix:(co-)cr_q_intrinsic}

\indent
A \emph{conjugation}, on a quaternionic vector space, is an involutive quaternionic automorphism (not equal to the
identity); in particular, the corresponding orientation preserving isometry on the space of admissible complex structures
is a symmetry in a line.

\begin{exm}[\,\cite{Bon}\,] \label{exm:conjugation}
Let $U^{\Hq}=\Hq\otimes U$ be the \emph{quaternionification} of a vector space $U$ (the tensor product is taken over $\R$),
endowed with the linear quaternionic structure induced by the multiplication to the left.\\
\indent
If $q\in S^2$ then the association $q'\otimes u\mapsto-qq'q\otimes u$, for any $q'\in\Hq$ and $u\in U$,
defines a conjugation on $U^{\Hq}$.
\end{exm}

\indent
In fact, more can be proved.

\begin{prop}
Any pair of distinct commuting conjugations $\t_1$ and $\t_2$ on a quaternionic vector space $E$ determine
a quaternionic linear isomorphism $E=U^{\Hq}$, for some vector space $U$, so that $\t_1$ and $\t_2$ are
defined, as in Example \ref{exm:conjugation}\,, by two orthogonal imaginary unit quaternions.
\end{prop}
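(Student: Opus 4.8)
The plan is to reduce everything to an \emph{adapted} admissible hypercomplex triple determined by the two conjugations, and then to realize $E$ as $U^{\Hq}$ with $U$ the common fixed space of $\t_1$ and $\t_2$. First I would pass to the induced isometries: let $T_1,T_2\in{\rm SO}(3)$ be the orientation preserving isometries of $Z\,(\cong S^2)$ associated to $\t_1,\t_2$. By the definition of a conjugation each $T_a$ is a symmetry in a line, i.e. the half-turn about some unit vector $q_a\in{\rm Im}\Hq$. Since $\t_1\t_2=\t_2\t_1$ and the isometry associated to a nonzero quaternionic automorphism is unique, we get $T_1T_2=T_2T_1$. Two commuting half-turns of $S^2$ have either equal or orthogonal axes, and I would rule out the equal-axis case using that $\t_1$ and $\t_2$ are distinct, obtaining $q_1\perp q_2$. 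Setting $q_3=q_1q_2$, the triple $(q_1,q_2,q_3)$ is a positively oriented orthonormal basis of ${\rm Im}\Hq$.

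Next I would build the module structure. For each $a$ let $J_a\in Z$ be the admissible complex structure corresponding to $q_a$ (one of the two points of $Z$ fixed by $T_a$). Because $q_1\perp q_2$, the complex structures $J_1,J_2$ anticommute, so $(J_1,J_2,J_3=J_1J_2)$ is an admissible hypercomplex triple representing the quaternionic structure of $E$; it makes $E$ a left $\Hq$-module via the algebra morphism $\s$ with $\s({\rm i})=J_1$, $\s({\rm j})=J_2$, $\s({\rm k})=J_3$. From $\t_aJ=T_a(J)\t_a$, together with the fact that the half-turn $T_a$ fixes $q_a$ and reverses the vectors orthogonal to it, I read off that $\t_a$ commutes with $J_a$ and anticommutes with the other two; equivalently, $\t_a\,\s(q)=\s(q_aqq_a^{-1})\,\t_a$ for every $q\in\Hq$.

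Now set $U=\{v\in E\mid\t_1v=v,\ \t_2v=v\}$ and decompose $E$ into the joint $(\pm1)$-eigenspaces $E_{\ep_1\ep_2}$ of the commuting involutions $\t_1,\t_2$, so that $U=E_{++}$. The commutation relations show that $J_1$ sends $E_{\ep_1\ep_2}$ to $E_{\ep_1,-\ep_2}$, that $J_2$ sends it to $E_{-\ep_1,\ep_2}$, and that $J_3$ sends it to $E_{-\ep_1,-\ep_2}$; as each $J_b$ is invertible, $J_1,J_2,J_3$ restrict to isomorphisms of $U$ onto $E_{+-},E_{-+},E_{--}$, respectively. Hence $E=U\oplus J_1U\oplus J_2U\oplus J_3U$, which is exactly the statement that $\Phi:U^{\Hq}\to E$, $\Phi(q\otimes v)=\s(q)v$, is a bijection; since $\s$ is an algebra morphism, $\Phi$ is hypercomplex, hence quaternionic, linear. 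Finally, for $v\in U$ one computes $\t_a\Phi(q\otimes v)=\s(q_aqq_a^{-1})v=\Phi\bigl((-q_aqq_a)\otimes v\bigr)$, which is precisely the conjugation of Example~\ref{exm:conjugation} determined by $q_a$. Thus $\Phi$ carries $\t_1,\t_2$ to the standard conjugations attached to the orthogonal imaginary units $q_1,q_2$, as required.

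The eigenspace bookkeeping in the third step is routine once the module structure is in place, so the main obstacle is the orthogonality of the axes. The delicate point is that a conjugation and its negative induce the \emph{same} half-turn, so mere distinctness $\t_1\neq\t_2$ does not automatically give $T_1\neq T_2$: one must use distinctness at the level of the induced symmetries, equivalently exclude the degenerate case $\t_1\t_2=-{\rm Id}$ (the only way two commuting conjugations can share an axis), in which the axes coincide and no standard model of the stated form exists. Once the axes are known to be orthogonal, the construction above goes through directly.
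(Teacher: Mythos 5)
Your argument is correct and follows essentially the same route as the paper's: pass to the induced symmetries in lines, exclude the coaxial case, and split $E$ into the four joint eigenspaces of the commuting involutions $\t_1,\t_2$ (your $E_{\ep_1\ep_2}$ are the paper's $V^{\pm},W^{\pm}$), which $I,J,K$ permute onto one another. The subtlety you flag at the end --- that $\t_1\neq\t_2$ does not by itself force the induced symmetries $T_1,T_2$ to be distinct, since $\t_1\t_2$ could be a nontrivial $\Hq$-linear involution --- is genuine, and the paper's own proof disposes of the coaxial case with the same one-line appeal to distinctness, so your extra caution there is a point in your favour rather than a divergence.
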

\begin{proof}
Let $T_1,T_2:Z\to Z$ be the orientation preserving isometries
corresponding to $\t_1$\,, $\t_2$\,, respectively, where $Z$ is the space of admissible linear complex structures on $E$.\\
\indent
As $T_1$ and $T_2$ are commuting symmetries in lines $\ell_1$ and $\ell_2$\,, respectively, it follows that
either $\ell_1=\ell_2$ or $\ell_1\perp\ell_2$\,. In the former case, we would have $T_1T_2={\rm Id}_Z$ which, together with the fact
that $\t_1$ and $\t_2$ are commuting involutions, implies $\t_1=\t_2$\,, a contradiction.
Thus, if $\ell_1$ and $\ell_2$ are generated by $I$ and $J$, respectively, then $IJ=-IJ$; denote $K=IJ$.\\
\indent
Now, $E=U^+\oplus U^-$, where $U^{\pm}={\rm ker}\bigl(\t_1\mp{\rm Id}_E\bigr)$. Furthermore, as $\t_1\t_2=\t_2\t_1$\,,
we have $U^+=V^+\oplus V^-$ and $U^-=W^+\oplus W^-$, where $V^{\pm}={\rm ker}\bigl(\t_2|_{U^+}\mp{\rm Id}_{U^+}\bigr)$
and $W^{\pm}={\rm ker}\bigl(\t_2|_{U^-}\mp{\rm Id}_{U^-}\bigr)$\,.\\
\indent
A straightforward argument shows that $IV^+=V^-$, $JV^+=W^+$ and $KV^+=W^-$. Thus, if we denote $U=V^+$ then
$E=U\oplus IU\oplus JU\oplus KU$ and the association $q\otimes u\mapsto q_0u+q_1Iu+q_2Ju+q_3Ku$\,, for any
$q=q_0+q_1{\rm i}+q_2{\rm j}+q_3{\rm k}\in\Hq$ and $u\in U$, defines a quaternionic linear isomorphism from
$U^{\Hq}$ onto $E$ which is as required.
\end{proof}

\indent
The quaternionification of a linear map is defined in the obvious way. Then a quaternionic linear map between
the quaternionifications of two vector spaces is the quaternionification of a linear map if and only if it intertwines
two distinct commuting conjugation.\\
\indent
Let $U$ be a vector space and let $\Lambda$ be the space of conjugations on~$U^{\Hq}$.\\
\indent
The next proposition reformulates a result of \cite{Bon}\,.

\begin{prop} \label{prop:q_Bonan}
There exist natural correspondences between the following:\\
\indent
\quad{\rm (i)} Linear quaternionic structures on $U$;\\
\indent
\quad{\rm (ii)} Quaternionic vector subspaces $B\subseteq U^{\Hq}$ such that $U^{\Hq}=B\oplus\sum_{\t\in\Lambda}\t(B)\,;$
\indent
\quad{\rm (iii)} Quaternionic vector subspaces $C\subseteq U^{\Hq}$ such that $U^{\Hq}=C\oplus\bigcap_{\t\in\Lambda}\t(C)\,.$
\indent
Furthermore, the correspondences are such that $C=\sum_{\t\in\Lambda}\t(B)$ and $B=\bigcap_{\t\in\Lambda}\t(C)$\,.
\end{prop}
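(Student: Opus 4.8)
The plan is to make the three kinds of data concrete on $U^{\Hq}=\Hq\otimes U$, to produce mutually inverse maps between (i) and (ii), and then to read off (iii) together with the formulas $C=\sum_{\t\in\Lambda}\t(B)$ and $B=\bigcap_{\t\in\Lambda}\t(C)$. Throughout I would equip $U^{\Hq}$ with its canonical linear quaternionic structure given by left multiplication $L_q$ on the first factor, and I would also use right multiplication $\rho_q$ on the first factor; in this notation the conjugations of Example~\ref{exm:conjugation} are $\t_q=L_q\circ\rho_q^{-1}$, $q\in S^2$, and I regard $\Lambda$ as parametrised by $S^2$ through $q\mapsto\t_q$.

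For (i)$\Rightarrow$(ii) I would fix a representative hypercomplex structure $\s\colon\Hq\to{\rm End}(U)$ of the given linear quaternionic structure and let $R_q=\mathrm{id}\otimes\s(q)$ be the induced action on the second factor, which commutes with both $L$ and $\rho$. Then I would set
$$B=\{\,\xi\in U^{\Hq}\ :\ R_q\,\xi=\rho_q\,\xi\ \text{ for all }q\in S^2\,\},\qquad C=\sum_{\t\in\Lambda}\t(B)\,.$$
Since $L$ commutes with $\rho$ and $R$, the space $B$ is $L$-invariant, hence a quaternionic subspace; and because $R_q\rho_q^{-1}$ is an involution for each $q\in S^2$, intersecting the two $(+1)$-eigenspaces for $q={\rm i},{\rm j}$ gives $\dim_\R B=\dim_\R U$. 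One point to be checked here is that $B$ depends on the chosen $\s$ only through the second-factor action, so that, together with the equivariance recorded below, the correspondence is well defined and natural at the level of quaternionic structures.

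For (ii)$\Rightarrow$(i), assuming $U^{\Hq}=B\oplus C$, the composite $U=1\otimes U\hookrightarrow U^{\Hq}\to B$ of the canonical inclusion with the projection along $C$ is a linear isomorphism, and transporting the quaternionic structure of $B$ back to $U$ through it yields a linear quaternionic structure on $U$; I would check this is inverse to (i)$\Rightarrow$(ii). To pass to (iii) I keep $C=\sum_{\t}\t(B)$ and prove $\bigcap_{\t}\t(C)=B$: the inclusion $B\subseteq\bigcap_\t\t(C)$ is immediate from $\t(B)\subseteq C$ (each $\t$ being an involution), while the reverse inclusion follows from the dimension count below, so that (iii) is simply the decomposition $U^{\Hq}=B\oplus C$ read from the $C$-side. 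Naturality is the assertion that $U\mapsto U^{\Hq}$, the formation of $B$, and the operations $\sum_\t$ and $\bigcap_\t$ are all equivariant under quaternionic linear isomorphisms of $U$, which I would read off directly from the definitions.

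The hard part will be the transversality assertions $U^{\Hq}=B\oplus\sum_{\t\in\Lambda}\t(B)$ and $\bigcap_{\t\in\Lambda}\t(C)=B$, that is, controlling how the family $\{\t_q\}_{q\in S^2}$ displaces the diagonal subspace $B$. Here I would trivialise $U^{\Hq}$ as a free left $\Hq$-module using a basis of the second factor and use $\t_q=L_q\rho_q^{-1}$ together with the fact that conjugation $q(\cdot)q^{-1}=2\,q\,q^{\mathsf{t}}-\mathrm{id}$ on ${\rm Im}\,\Hq$ sweeps out all symmetric endomorphisms as $q$ ranges over $S^2$; this reduces $\sum_{q}\t_q(B)$ to an explicit left $\Hq$-span, which one computes to have $\dim_\R=3\dim_\R U$ and to meet $B$ only in $0$. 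This dimension count is precisely the linear-algebraic core of the cited result of \cite{Bon}, which I would invoke (or reproduce in this form) to conclude; the remaining assertions are then formal.
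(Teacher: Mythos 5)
The paper offers no proof of this proposition --- it is stated as a reformulation of a result of \cite{Bon} --- so your proposal must stand on its own, and it breaks at exactly the point you defer as ``to be checked'': your $B$ does \emph{not} depend only on the linear quaternionic structure. Unwinding your definition, $B_\s=\{1\otimes u-{\rm i}\otimes\s({\rm i})u-{\rm j}\otimes\s({\rm j})u-{\rm k}\otimes\s({\rm k})u\,:\,u\in U\}$, and replacing $\s$ by the equivalent hypercomplex structure $\s\circ a$, where $a\in{\rm SO}(3)$ is the rotation by $\pi$ about $q\in S^2$, gives $B_{\s\circ a}=\t_q(B_\s)$. The very transversality you are trying to prove then yields $B_{\s\circ a}\cap B_\s\subseteq\bigl(\sum_{\t}\t(B_\s)\bigr)\cap B_\s=0$: two representatives of the \emph{same} linear quaternionic structure produce transverse (in particular distinct) subspaces. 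Concretely, for $U=\Hq$ and $\s$ left multiplication, $B_\s=\Hq\bigl(1\otimes1-{\rm i}\otimes{\rm i}-{\rm j}\otimes{\rm j}-{\rm k}\otimes{\rm k}\bigr)$, while the $\pi$-rotation about ${\rm i}$ gives $\Hq\bigl(1\otimes1-{\rm i}\otimes{\rm i}+{\rm j}\otimes{\rm j}+{\rm k}\otimes{\rm k}\bigr)$, and these meet only in $0$; the same happens for $C$, since $C_{\s\circ a}=\t_q(C_\s)\neq C_\s$. Moreover each valid $B$ determines $\s$ on the nose ($L_q|_B$ transports to $\s(q)$ under your isomorphism $U\cong B$), so your recipe is a bijection with \emph{hypercomplex} structures on $U$ and cannot descend to equivalence classes. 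Proving the proposition as stated requires either reading (i) as ``linear hypercomplex structures'' or a genuinely ${\rm SO}(3)$-invariant construction; your argument supplies neither, and this is a gap of substance, not bookkeeping.

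Two further points. First, your silent replacement of $\Lambda$ by the sphere of conjugations of Example \ref{exm:conjugation} must be made explicit and justified: as literally defined, $\Lambda$ contains, e.g., $\t_q\circ({\rm id}_{\Hq}\otimes\phi)$ for every involution $\phi\neq{\rm id}_U$ of $U$ (this is an involutive quaternionic automorphism inducing a symmetry in a line on $Z$), and for $U=\Hq$, $\phi$ the quaternionic conjugation, its image of $B_\s$ is not contained in $\sum_{q}\t_q(B_\s)$, so the sum over all of $\Lambda$ is all of $U^{\Hq}$ and conditions (ii)--(iii) become unsatisfiable. Second, everything you label the hard part --- that $\sum_q\t_q(B_\s)$ has dimension $3\dim U$ and misses $B_\s$, that $\bigcap_q\t_q(C)=B$, that $1\otimes U\to B$ is an isomorphism, and that every $B$ satisfying (ii) arises from some $\s$ --- is the entire content of the statement; deferring it wholesale to \cite{Bon} mirrors what the paper does, but then the proposal amounts to setting up (the correct) notation rather than giving a proof.
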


\indent
We can now give the intrinsic description of linear CR quaternionic structures.

\begin{prop} \label{prop:cr_q_intrinsic}
There exists a natural correspondence between the following:\\
\indent
{\rm (i)} Linear CR quaternionic structures on $U$;\\
\indent
{\rm (ii)} Quaternionic vector subspaces $C\subseteq U^{\Hq}$ such that\\
\indent
\qquad{\rm (ii1)} $C\cap\bigcap_{\t\in\Lambda}\t(C)=0$\,,\\
\indent
\qquad{\rm (ii2)} $C+\s(C)=U^{\Hq\!}$, for any $\s\in\Lambda$\,.
\end{prop}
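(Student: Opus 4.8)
The plan is to set up the correspondence explicitly in both directions, using the quaternionification $U^{\Hq}$ together with the standard conjugations of Example \ref{exm:conjugation}, and to organise the argument so that it specialises to Proposition \ref{prop:q_Bonan} in the nondegenerate case. To a quaternionic subspace $C\subseteq U^{\Hq}$ satisfying (ii1) and (ii2) I would associate the quaternionic vector space $E:=U^{\Hq}/C$, with the linear quaternionic structure induced from $U^{\Hq}$ (this makes sense precisely because $C$ is a quaternionic subspace), and the linear map $\iota:U\to E$, $\iota(u)=[1\otimes u]$. Conversely, to a linear CR quaternionic structure $(E,\iota)$ on $U$ I would associate $C:=\ker\Psi$, where $\Psi:U^{\Hq}\to E$ is the quaternionic-linear surjection extending $\iota$ (so $\Psi(q\otimes u)=T(L_q)\,\iota(u)$ for the accompanying isometry $T$ between the spheres of admissible complex structures). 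Here the only freedom is the choice of $T$, i.e. an action of $\mathrm{SO}(3)$ by $q\otimes u\mapsto a(q)\otimes u$; since this action is realised by a quaternionic isomorphism of $U^{\Hq}$ fixing $1\otimes U$, it changes $(E,\iota)$ only within its isomorphism class, so the correspondence is natural. A direct check shows that when $U^{\Hq}=C\oplus\bigcap_{\tau}\tau(C)$ the map $\iota$ is an isomorphism, recovering Proposition \ref{prop:q_Bonan}.

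Next I would translate the two defining conditions of a CR quaternionic structure. The admissible complex structures on $E=U^{\Hq}/C$ are exactly those induced by the left multiplications $L_q$, $q\in S^2$. For the standard conjugation $\tau_q$ of Example \ref{exm:conjugation}, the $(+1)$- and $(-1)$-eigenspaces are $(\R\oplus\R q)\otimes U$ and $(q^{\perp}\cap\mathrm{Im}\,\Hq)\otimes U$. Thus the spanning condition $\mathrm{im}\,\iota+L_q(\mathrm{im}\,\iota)=E$ becomes $(\R\oplus\R q)\otimes U+C=U^{\Hq}$, i.e. the projection of $C$ onto the $(-1)$-eigenspace of $\tau_q$ is onto. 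The key observation is that, since $C$ is quaternionic, for any $q'\perp q$ the map $L_{q'}$ preserves $C$ and interchanges the two eigenspaces of $\tau_q$; hence surjectivity onto one eigenspace forces surjectivity onto the other, which is exactly $C+\tau_q(C)=U^{\Hq}$. This identifies the spanning condition with (ii2), evaluated on the standard conjugations. For injectivity, a nonzero $u$ with $\iota(u)=0$ produces a quaternionic line $\Hq\otimes\R u\subseteq C$; as this line is preserved by every $\tau_q$, it lies in $C\cap\bigcap_{q\in S^2}\tau_q(C)$, so (ii1) forces $u=0$ once one knows that the standard conjugations compute the full intersection $\bigcap_{\tau\in\Lambda}\tau(C)$.

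The main obstacle is precisely this discrepancy: conditions (ii1) and (ii2) are phrased for the entire family $\Lambda$ of conjugations, while the constructions above naturally involve only the standard ones $\tau_q$. In the direction from a CR structure to $C$ this is harmless, since there one must \emph{prove} (ii1) and (ii2) for all $\sigma\in\Lambda$, which can be done directly from $C=\ker\Psi$ and the eigenspace analysis above. The subtle direction is the reverse one, where one must show that (ii2) for the standard conjugations already implies it for an arbitrary $\sigma\in\Lambda$, and that $\bigcap_{q\in S^2}\tau_q(C)=\bigcap_{\tau\in\Lambda}\tau(C)$. I would handle both by reducing a general conjugation to standard form, using Example \ref{exm:conjugation} and the analysis of conjugations underlying the Proposition on commuting conjugations, together with Proposition \ref{prop:q_Bonan} to control the quaternionic part, so that a general $\sigma$ contributes nothing beyond the standard conjugations. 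Once this reduction is in place, the two assignments are seen to be mutually inverse and natural, completing the correspondence.
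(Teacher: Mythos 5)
Your two constructions are exactly the paper's: it sends $(E,\iota)$ to $\bigl(\iota^{\Hq}\bigr)^{-1}(C_E)$ --- which is precisely your $\ker\Psi$, since $C_E$ is the kernel of the evaluation map $E^{\Hq}\to E$ furnished by Proposition \ref{prop:q_Bonan} --- and conversely sets $E=U^{\Hq}/C$ with $\iota$ the inclusion of $U$ followed by the projection onto the quotient. The paper's proof consists of these two sentences and nothing more, so every verification you carry out (in particular the reduction of arbitrary conjugations in $\Lambda$ to the standard ones of Example \ref{exm:conjugation}, which you rightly single out as the only delicate point) is left implicit there as well.
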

\begin{proof}
If $(E,\iota)$ is a linear CR quaternionic structure on $U$ then $C=\bigl(\iota^{\Hq\!}\bigr)^{-1}(C_E)$ satisfies
assertion (ii)\,, where $C_E$ is the quaternionic vector subspace of $E^{\Hq}$ given by assertion (iii) of Proposition \ref{prop:q_Bonan}\,.\\
\indent
Conversely, if $C$ is as in (ii) then on defining $E=U^{\Hq\!}/C$ and $\iota$ to be the composition of the inclusion
of $U$ into $U^{\Hq\!}$ followed by the projection from the latter onto $E$ we obtain the corresponding linear CR quaternionic structure.
\end{proof}

\indent
Finally, by duality, we also have.

\begin{prop} \label{prop:co-cr_q_intrinsic}
There exists a natural correspondence between the following:\\
\indent
{\rm (i)} Linear co-CR quaternionic structures on $U$;\\
\indent
{\rm (ii)} Quaternionic vector subspaces $B\subseteq U^{\Hq}$ such that\\
\indent
\qquad{\rm (ii1)} $U^{\Hq\!}=B+\sum_{\t\in\Lambda}\t(B)$\,,\\
\indent
\qquad{\rm (ii2)} $B\cap\s(B)=0$\,, for any $\s\in\Lambda$\,.
\end{prop}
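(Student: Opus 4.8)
The plan is to obtain Proposition~\ref{prop:co-cr_q_intrinsic} purely by dualizing Proposition~\ref{prop:cr_q_intrinsic}, exploiting the fact that the annihilator/dual-pair formalism interchanges CR and co-CR quaternionic structures. First I would recall that a linear co-CR quaternionic structure on $U$ is, by definition, a surjection $\r:E\to U$ from a quaternionic vector space with $\ker\r\cap J(\ker\r)=0$ for all admissible $J$, and that its dual (in the sense of the dual pair $(\ker\r,E)\mapsto(\mathrm{Ann}\,\ker\r,E^*)$) is precisely a linear CR quaternionic structure on $U^*$. Concretely, a linear co-CR quaternionic structure on $U$ corresponds, via transposition, to a linear CR quaternionic structure on $U^*$, because the injectivity-plus-spanning condition $\mathrm{im}\,\iota+J(\mathrm{im}\,\iota)=E$ dualizes exactly to the surjectivity-plus-transversality condition defining the co-CR case.

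The key computational step is to track how conditions (ii1) and (ii2) transform under this duality. Applying Proposition~\ref{prop:cr_q_intrinsic} to $U^*$ produces a quaternionic subspace $C\subseteq (U^*)^{\Hq}=(U^{\Hq})^*$ satisfying $C\cap\bigcap_{\t}\t(C)=0$ and $C+\s(C)=(U^*)^{\Hq}$ for every conjugation $\s$. I would then take annihilators: setting $B=\mathrm{Ann}\,C\subseteq U^{\Hq}$, the standard linear-algebra identities $\mathrm{Ann}(C_1+C_2)=\mathrm{Ann}\,C_1\cap\mathrm{Ann}\,C_2$ and $\mathrm{Ann}(C_1\cap C_2)=\mathrm{Ann}\,C_1+\mathrm{Ann}\,C_2$ convert the intersection condition on $C$ into the sum condition (ii1) for $B$ and the sum condition on $C$ into the intersection condition (ii2) for $B$. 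The crucial observation making this work is that the space of conjugations $\Lambda$ is the same for $U$ and $U^*$ (a conjugation on $U^{\Hq}$ transposes to a conjugation on $(U^{\Hq})^*=(U^*)^{\Hq}$), and that the transpose of $\t$ is again the conjugation indexed by the same orthogonal imaginary unit quaternion, so the indexing set $\Lambda$ is respected.

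The main obstacle, and the point requiring genuine care rather than a one-line appeal to duality, is verifying that $B=\mathrm{Ann}\,C$ is again a \emph{quaternionic} vector subspace (not merely a real one) and that the bijection respects the quaternionic structures on both sides. Here I would use that the dual linear hypercomplex structure $\s^*$ was defined in the text via $\s^*(q)=$ transpose of $\s(\overline q)$, together with the fact that a conjugation corresponds to a symmetry in a line on the sphere of admissible complex structures; one checks that $C$ being invariant under the quaternionic action forces its annihilator $B$ to be invariant under the dual quaternionic action, and that the conjugations match up correctly under transposition. Once this compatibility is in place, the correspondence of Proposition~\ref{prop:cr_q_intrinsic}, read through the annihilator, yields Proposition~\ref{prop:co-cr_q_intrinsic} directly, and the final sentence of the proof can simply record that the two correspondences are mutually inverse because taking annihilators twice recovers the original subspace.
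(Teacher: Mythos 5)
Your proposal is correct and follows exactly the route the paper intends: the paper's entire ``proof'' is the phrase ``by duality'', and what you have written is precisely that duality argument made explicit --- dualizing Proposition \ref{prop:cr_q_intrinsic} applied to $U^*$, identifying $(U^*)^{\Hq}$ with $\bigl(U^{\Hq}\bigr)^{*}$ so that conjugations transpose to conjugations, and passing through annihilators to swap sums with intersections. The points you flag as needing care (that $B=\mathrm{Ann}\,C$ is quaternionic for the dual structure and that $\Lambda$ is respected) are exactly the ones implicit in the paper's appeal to the dual linear quaternionic structure, so no further comparison is needed.
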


\end{document}